\newtheorem{thm}{Theorem}[section]
\newtheorem{lemma}[thm]{Lemma}
\theoremstyle{definition}
\theoremstyle{remark}
\newtheorem{remark}[thm]{Remark}
\theoremstyle{example}
\newtheorem{example}[thm]{Example}
\begin{document}
\title[On the Hurwitz-type zeta function]{On the Hurwitz-type zeta function associated to the Lucas sequence}
\author[L. Smajlovi\'{c}]{Lejla Smajlovi\'{c}}
\address{School of Economics and Business\\
University of Sarajevo\\
Trg Oslobodjenja Alija Izetbegovi\'{c} 1\\
71 000 Sarajevo\\
Bosnia and Herzegovina}
\email{lejla.smajlovic@efsa.unsa.ba}
\author[Z. \v{S}abanac]{Zenan \v{S}abanac}
\address{Department of Mathematics\\
University of Sarajevo\\
Zmaja od Bosne 35\\
71 000 Sarajevo\\
Bosnia and Herzegovina}
\email{zsabanac@pmf.unsa.ba}
\author[L. \v{S}\'{c}eta]{Lamija \v{S}\'{c}eta}
\address{School of Economics and Business\\
University of Sarajevo\\
Trg Oslobodjenja Alija Izetbegovi\'{c} 1\\
71 000 Sarajevo\\
Bosnia and Herzegovina}
\email{lamija.sceta@efsa.unsa.ba}

\begin{abstract}
We study the theta function and the Hurwitz-type zeta function associated to the Lucas sequence $U=\{U_n(P,Q)\}_{n\geq 0}$ of the first kind determined by the real numbers $P,Q$ under certain natural assumptions on $P$ and $Q$. We deduce an asymptotic expansion of the theta function $\theta_U(t)$ as $t\downarrow 0$ and use it to obtain a meromorphic continuation of the Hurwitz-type zeta function $\zeta _{U}\left( s,z\right) =\sum\limits_{n=0}^{\infty }\left(z+U_{n}\right) ^{-s}$ to the whole complex $s-$plane. Moreover, we identify the residues of $\zeta _{U}\left( s,z\right)$ at all poles in the half-plane $\Re(s)\leq 0$.
\end{abstract}

\subjclass{11B39, 11M41, 30B40}
\keywords{Lucas zeta function, theta function, Hurwitz-type zeta function, meromorphic continuation}
\maketitle


\section{Introduction}

The Lucas sequences $\{U_n(P,Q)\}_{n\geq 0}$ and $\{V_n(P,Q)\}_{n\geq 0}$ of the first and the second kind associated to arbitrary real numbers $P,Q$ are defined for all non-negative integers $n$ by
\begin{equation}\label{eq: Lucas seq def}
U_{n}=\frac{a^{n}-b^{n}}{a -b }\quad \text{and} \quad V_{n}=a^{n}+b^{n},
\end{equation}
where $a$ and $b$ ($a\geq b$) are the roots of the quadratic equation $x^{2}-Px+Q =0$ and by definition $U_0=1$ and $V_0=2$. Those sequences were first studied by E. Lucas in \cite{Luacas}, who considered the special case when $P,Q$ are relatively prime integers.

Some well-known special cases of sequences  $\{U_n(P,Q)\}_{n\geq 0}$ and $\{V_n(P,Q)\}_{n\geq 0}$ include Fibonacci numbers $F_n=U_{n}(1,-1)$, Lucas numbers $L_n=V_{n}(1,-1)$, Pell numbers $U_{n}(2,-1)$, Pell-Lucas numbers $V_{n}(2,-1)$, Jacobsthal numbers $U_{n}(1,-2)$ and Jacobsthal-Lucas numbers $V_{n}(1,-2)$, and they are all indexed in the On-Line Encyclopedia of Integer Sequences \cite{oeis} as $A000045$, $A000032$, $ A000129$, $A002203$, $A001045$ and $A014551$, respectively. There are many further generalizations of Lucas sequences; some of them are given in \cite{Bil, HST, Omer}, as well as numerous articles studying different properties of those sequences (see e.g. \cite{HSZ, kilic, OP2, OP1, sanna, SZ}).

Zeta and $L-$functions in number theory carry important information related to the object they are associated to. Hence, it is of interest to study zeta-type functions associated to Lucas sequences. The zeta function $\zeta_{\{F_n\}}(s):= \sum_{n\geq 1} F_n^{-s}$, associated to the Fibonacci sequence was studied in \cite{Egami} and \cite{Navas}, where a meromorphic continuation of $\zeta_{\{F_n\}}(s)$ to $s\in\mathbb{C}$ was deduced. Moreover, in \cite{Navas} some interesting algebraicity and transcendence results for special values of $\zeta_{\{F_n\}}(s)$ were derived, see also \cite{MurtyFib} for related results. Analytic continuation of the multiple Fibonacci zeta functions was studied in \cite{RoMe}.

The Lucas zeta function
\begin{equation} \label{eq: defn Lucas zeta f-on}
\zeta_{\{U_n(P,Q)\}}(s)=\zeta _{U}\left( s\right) :=\sum_{n=1}^{\infty }\frac{1}{U_{n}\left(
P,Q\right) ^{s}},\quad  s\in \mathbb{C}, \Re(s) >0,
\end{equation}
associated to the Lucas sequence $U=\{U_n(P,Q)\}_{n \geq 0}$ of the first kind was studied by K. Kamano in \cite{Kamano}, who  described in detail its polar structure. Kamano also studied the $L-$function defined by twisting $\zeta_{U}(s)$ by a primitive, multiplicative Dirichlet character. Going further, N. K. Meher and S. S. Rout \cite{MeRo} extended results from \cite{Kamano} and proved analytic continuation of the multiple Lucas zeta function.

Hurwitz-type zeta functions, a natural generalization of zeta and $L-$functions associated to certain sequences appear in many mathematical and physical disciplines. The Hurwitz-type zeta function associated to the sequence $A=\{a_n\}_{n\geq 0}$ of complex numbers is formally defined for $\Re (s)\gg 0$ by
$$
\zeta_A(s,z)=\sum_{n=0}^{\infty }\frac{1}{\left(
z+a_{n}\right) ^{s}},
$$
for all $z\in\mathbb{C}$ such that $z+a_n \notin \mathbb{R}_{\leq 0}$ for all $n\geq 0$ and $\left(z+a_{n}\right) ^{-s}$ is defined using the principal branch of the logarithm.

When the sequence $A$ is the sequence of eigenvalues of a certain operator, Hurwitz-type zeta functions appears in the literature under the name "two parameter spectral function", or "generalized zeta function", see, e.g. \cite{Vor2} and \cite{GJ}, or \cite[Chap. 4]{Elizalde} where the study of Epstein-Hurwitz zeta functions is nicely presented.

A thorough study of Hurwitz-type zeta functions associated to a general sequence $A$ of complex numbers, satisfying certain mild conditions was conducted in \cite{JOrgenson i Lang}, where generalizations of various were derived (e.g. the Lerch formula, the Gauss formula, the Stirling formula and many others). A more general study is presented in \cite{Illies}, where the sequence $A$ is replaced by a countable set $D$, the so-called "directed divisor".

When the sequence $A$ is the sequence of zeros of a zeta or $L-$function, the Hurwitz-type zeta function appears under the name "super-zeta" function. Polar structure of such functions can be described using the approach explained in \cite{Vor1} (see also \cite{FJS} for a result relating super-zeta functions to determinants of Laplacians on cofinite Fuchsian groups).

In this paper, we study the Hurwitz-type zeta function associated to the sequence $U=\{U_n(P,Q)\}_{n\geq 0}$ of Lucas numbers of the first kind associated to real parameters $P,Q$ which are arbitrary, but fixed throughout the paper (and hence, omitted from notation) and which satisfy the technical assumption \eqref{assumption} below. This function is defined for $\Re (s)>0$ and $\Re (z)>0$ by
\begin{equation} \label{eq: zeta defn}
\zeta _{U}\left( s,z\right) =\sum_{n=0}^{\infty }\frac{1}{\left(
z+U_{n}\right) ^{s}},
\end{equation}%
where, as is common, we use the principal branch of the logarithm to define $(z+U_n)^{-s}$.

In our main theorem (Theorem \ref{thm: main} below) we prove that the Hurwitz-type zeta function $\zeta _{U}\left( s,z\right)$ admits a meromorphic continuation to the whole complex $s-$plane for $z\in \mathbb{C}\setminus
\left( -\infty ,0\right] $, and we give the complete list of possible poles of this meromorphic function and their corresponding residues. It is interesting to notice that for all sequences $U=\{U_n\}_{n\geq 0}$, the zeta function $\zeta _{U}\left( s,z\right)$ possesses poles at non-positive integers and other poles located on certain vertical lines in the half-plane $\Re(s)\leq 0$.

In order to illustrate our results, we study the Hurwitz-type zeta function associated to the sequence  $U=\{U_n\}_{n\geq 0}$, where $U_0=1$ and $U_n=\sum_{j=0}^{n-1}a^j$, for $n\geq 1$ is the partial sum of the divergent geometric series $\sum_{n\geq 0} a^n$, $a>1$ and prove that the residue of $\zeta _{U}\left( s,z\right)$ at $-\ell$, $\ell\in\mathbb{N}_0$ equals $\frac{1}{\log a}\left(z+\frac{1}{1-a}\right)^{\ell}$.

Our second example is devoted to the Hurwitz-type zeta function associated to the Fibonacci sequence $\{F_n\}_{n\geq 0}$. Let $\varphi=(\sqrt{5}+1)/2$ be the golden ratio and  $\bar\varphi=(1-\sqrt{5})/2$ its conjugate. We prove that the residue of the Hurwitz-type zeta function $\zeta_{\{F_n\}}(s,z)$ at the pole $-\ell$, $\ell\in\mathbb{N}_0$ equals $1/\log\varphi$ times sum of the terms in the trinomial $ (\sqrt{\varphi/5} + \sqrt{|\bar\varphi|/5}+ z)^{\ell}$ which possess rational coefficients.

The approach we undertake in the study of $\zeta _{U}\left( s,z\right) $ is different from previous studies \cite{Egami, MurtyFib, Navas} on this topic (and which is based on the Taylor series expansion), due to the fact that the sequence $\{U_n+z\}_{n\geq 0}$, for  $z\in \mathbb{C}\setminus \left( -\infty ,0\right] $  does not satisfy the Binet-type formula. Namely, our starting point is the fact that $\Gamma(s)\zeta _{U}\left( s,z\right)$, for $\Re (s)$ and $\Re (z)$ large enough equals the Laplace-Mellin transform of the corresponding theta function
\begin{equation} \label{eq: theta def}
\theta _{U}\left( t\right) :=\sum\limits_{n=0}^{\infty }e^{-U_{n}t}\text{, }%
t\in\mathbb{R}_{> 0}.
\end{equation}
This approach can be traced back to Riemann and is nicely explained in \cite{JOrgenson i Lang}. However, results of \cite{JOrgenson i Lang} on meromorphic continuation of the Hurwitz-type zeta function $\zeta_A(s,z)$ associated to a general sequence $A$ of complex numbers can not be applied in our setting, due to the fact that the Lucas zeta function has infinitely many poles in every strip in the half-plane $\Re (s)<0$ of width greater than one. Though more general than \cite{JOrgenson i Lang}, results of \cite{Illies} can not be applied for the same reason.

To overcome the above-mentioned problems, using the Mellin inversion, in Theorem \ref{thm: theta exp} below, we derive an asymptotic expansion for the theta function $\theta _{U}\left( t\right)$ as $t\downarrow 0$, up to $O(t^m)$, for an arbitrary positive integer $m$. Using this expansion, we are able to deduce the meromorphic continuation and the polar structure of $\zeta _{U}\left( s,z\right)$.

The structure of the paper is the following: In Section 2 we prove certain properties of the Lucas zeta function, needed to derive an asymptotic expansion for the theta function $\theta _{U}\left( t\right)$ in Section 3. Section 4 is devoted to description of the polar structure of the function $\zeta _{U}\left( s,z\right)$. In the last section we derive an explicit evaluation of the polar structure of Hurwitz-type zeta functions associated to the Fibonacci sequence and to the sequence of partial sums of a divergent geometric series. We end the paper with concluding remarks, where we discuss the case when $Q=0$ and future projects related to zeta functions of recurrence sequences of the third order.

\section{Properties of the Lucas zeta function}

Let $P$ and $Q$ be arbitrary real numbers such that
\begin{equation}
P>0,\quad Q\neq 0\quad \text{and}\quad \left\{
\begin{array}{ll}
Q\leq P-1, & \hbox{$P>2$,} \\
Q<P-1, & \hbox{$0<P\le 2$.}%
\end{array}%
\right.  \label{assumption}
\end{equation}
In the sequel we will assume that $P,Q \in \mathbb{R}$ are arbitrary, fixed numbers satisfying \eqref{assumption} and we will omit them from the notation. The following lemma summarizes properties of the sequence $\{U_{n}\}_{n \geq 0}$.

\begin{lemma} \label{DIR}
The Lucas sequence $\{U_{n}\}_{n \geq 0}$ and the corresponding Lucas zeta function $\zeta _{U}$ possess the following properties:

\begin{itemize}
\item[(i)] $a>1$, $a>|b|>0$ and $U_{n}>0$ for all integers $n\geq 1$.

\item[(ii)] The infinite series $\zeta _{U}\left( s\right) $ is absolutely
convergent for $\Re (s)>0$.

\item[(iii)] For every $C\in \mathbb{R}_{\geq 0}$, there exists a positive integer $n_C$ such that the inequality $U_{n} > C$ holds true for all integers  $n\geq n_C$.
\end{itemize}
\end{lemma}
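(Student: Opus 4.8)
The plan is to exploit the explicit Binet-type description \eqref{eq: Lucas seq def} together with the elementary structure of the roots $a,b$ of $x^2-Px+Q=0$. First I would establish (i). Since $P>0$ and $Q\neq 0$, the roots satisfy $a+b=P>0$ and $ab=Q\neq 0$, so in particular $b\neq 0$ and $a\neq 0$. The discriminant is $P^2-4Q$; I would split into the real-root case ($P^2\geq 4Q$) and the complex-conjugate case ($P^2<4Q$, which forces $Q>0$). In the complex case $|a|=|b|=\sqrt{Q}$, and one checks from assumption \eqref{assumption} (which gives $Q<P-1$ when $0<P\leq 2$, and $Q\leq P-1$ when $P>2$, in either case $Q<P$ hence $\sqrt{Q}<\ldots$) that $\sqrt{Q}>1$; here $a>|b|$ fails as equality, so I would need to check that the genuinely useful inequality is $a\geq|b|$ with strict inequality exactly when the roots are real — I expect the paper's statement "$a>|b|$" to implicitly assume the non-degenerate real case or to be read as $a\ge |b|$, and I would phrase the argument accordingly, treating $a=b$ and $|a|=|b|$ carefully. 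In the real case, $a\geq b$ by the convention $a\geq b$ in the introduction; one shows $a>1$ by evaluating the quadratic $f(x)=x^2-Px+Q$ at $x=1$: $f(1)=1-P+Q\leq 0$ precisely under \eqref{assumption} (with $f(1)<0$ when $0<P\le 2$ and $f(1)\le 0$ when $P>2$), and since $f$ opens upward with $f(+\infty)=+\infty$, the larger root $a$ satisfies $a\geq 1$, in fact $a>1$ after ruling out $a=1$ using $Q\neq 0$ (if $a=1$ then $f(1)=0$, forcing $b=Q$ and $1+Q=P$, i.e. $Q=P-1$, which \eqref{assumption} permits only when $P>2$; but then $b=P-1>1$... so I would instead note $a=1\Rightarrow b=P-1=Q$, and one more sign check on $a>|b|$ disposes of this boundary). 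For $U_n>0$ when $n\geq 1$: if $a,b$ are real with $a>b$, then $a^n-b^n>0$ and $a-b>0$ in all relevant sub-cases (using $a>|b|$, so $a^n>|b|^n\geq |b^n|\geq b^n$); if $a,b=\bar a$ are complex, write $a=re^{i\varphi}$ and $U_n=r^{n-1}\sin(n\varphi)/\sin\varphi$, and positivity is less automatic — here I would use that under \eqref{assumption} the argument $\varphi$ is small enough (since $\cos\varphi=P/(2\sqrt Q)$ and $P>$ something forces $\varphi$ in a range keeping $\sin(n\varphi)$ positive for the indices that matter), though the cleanest route may simply be the recurrence $U_n=PU_{n-1}-QU_{n-2}$ with $U_0=1$, $U_1=P$: if $Q\leq 0$ then immediately all $U_n>0$ by induction, and if $Q>0$ one uses $P>2\geq 2\sqrt{Q}\cdot(\text{bound})$... — I anticipate this positivity in the oscillatory complex case is the \emph{main obstacle} and will require the sharpest use of \eqref{assumption}.

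For (ii), once (i) gives $a>1$ and (loosely) $U_n$ comparable to $a^{n-1}/(a-b)$ — more precisely $U_n=a^{n-1}\frac{1-(b/a)^n}{1-b/a}$ with $|b/a|<1$ (or $=1$ in the degenerate complex case, where instead $|U_n|\le n\,|a|^{n-1}\cdot C$) — one gets $U_n\geq c\,\lambda^n$ for some $\lambda>1$ and $c>0$ for all large $n$, whence $\sum U_n^{-\sigma}$ converges by comparison with a geometric series for every $\sigma=\Re(s)>0$; absolute convergence follows since $|U_n^{-s}|=U_n^{-\sigma}$ (using $U_n>0$). I would spell out the comparison: pick $1<\lambda<a$; then $U_n/\lambda^{n}\to\infty$, so $U_n>\lambda^n$ for $n\ge n_0$, giving $\sum_{n\ge n_0}U_n^{-\sigma}\le\sum\lambda^{-n\sigma}<\infty$.

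Finally, (iii) is essentially a restatement of the fact that $U_n\to\infty$: from the lower bound $U_n\ge \lambda^n$ (valid for $n\ge n_0$, $\lambda>1$) established in the course of proving (ii), given $C\ge 0$ choose $n_C=\max\{n_0,\lceil \log_\lambda \max(C,1)\rceil+1\}$, and then $U_n\ge\lambda^n>C$ for all $n\ge n_C$. I would present (iii) as an immediate corollary of the estimates assembled for (ii), so the only real work in the lemma is the case analysis on the roots in part (i), and within that, the positivity of $U_n$ when the roots are non-real.
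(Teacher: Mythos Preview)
Your chief worry --- the complex-conjugate root case --- is a phantom. Under assumption \eqref{assumption} the discriminant is always strictly positive: if $P>2$ then $Q\le P-1$ gives $P^2-4Q\ge P^2-4P+4=(P-2)^2>0$, while if $0<P\le2$ then $Q<P-1$ gives $P^2-4Q>(P-2)^2\ge0$. Hence the roots $a,b$ are real and distinct in every admissible case, and the oscillatory scenario you flag as ``the main obstacle'' simply does not occur. Once you observe this, your real-root analysis goes through cleanly: $f(1)=1-P+Q\le0$ forces $a\ge1$; the boundary $a=1$ would require $Q=P-1$ with $P>2$, giving the other root $b=Q=P-1>1>a$, contradicting $a\ge b$; and $a>|b|$ follows from $a+b=P>0$ (when $b<0$) or $a>b$ (when $b\ge0$). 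Then $U_n>0$, the geometric comparison for (ii), and the divergence argument for (iii) are exactly as you sketch. One small slip: the Binet formula \eqref{eq: Lucas seq def} gives $U_0=0$ and $U_1=1$, not $U_0=1$, $U_1=P$; this does not affect your induction idea (which you no longer need anyway), but be careful with the paper's somewhat idiosyncratic convention.

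By contrast, the paper's own proof is essentially a citation: parts (i) and (ii) are attributed to \cite[Prop.~3]{Kamano}, and (iii) is the one-line observation that $\frac{a^n-b^n}{a-b}\to+\infty$. So your approach is genuinely more self-contained --- it supplies the elementary argument that the paper outsources --- but it is longer than necessary because you did not first check that \eqref{assumption} rules out non-real roots.
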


\begin{proof}
Properties (i) and (ii) follow from \cite[Prop. 3]{Kamano}. Part (iii) follows from the fact that $\underset{n\rightarrow +\infty }{\lim }\frac{a^{n}-b^{n}}{a-b}=+\infty$.
\end{proof}

Under assumptions \eqref{assumption} on $P,Q$, in \cite[Thm. 4 on p. 640]{Kamano} it was proved that the Lucas zeta function $\zeta _{U}\left( s\right) $, associated to the Lucas sequence $\{U_{n}\}_{n \geq 0}$ of the first kind can be meromorphically continued to the whole $s$-plane and written as
\begin{equation} \label{eq: zeta cont Kamano}
\zeta _{U}\left( s\right)=D^{s/2}\sum_{k=0}^{\infty }\binom{-s}{k}%
\left( -1\right) ^{k}\frac{Q^{k}}{a^{s+2k}-Q^{k}} =D^{s/2}\sum_{k=0}^{\infty }\binom{-s}{k}%
\left( -1\right) ^{k}\frac{a^{-s-k}b^k}{1-a^{-s-k}b^{k}},
\end{equation}%
where $D=P^{2}-4Q=\left( a-b\right) ^{2}$. Moreover, it was proved that  $\zeta _{U}\left( s\right) $ is holomorphic except for possible simple poles at
\begin{equation}\label{eq: s k,n defn}
s=s_{k,n}=-2k+\frac{k\log |Q|}{\log a}+\frac{\left( 2n+l_{Q,k}\right) \pi i}{
\log a},
\end{equation}
where $k\geq 0$ and $n$ are arbitrary integers and $l_{Q,k}:=\left\{
\begin{array}{ll}
k, & \hbox{$Q<0$,} \\
0, & \hbox{$Q>0$.}%
\end{array}%
\right. $

The residue of $\zeta _{U}\left( s\right) $ at $s=s_{k,n}$ is given by
\begin{equation*}
\mathrm{Res}_{s=s_{k,n}}\left( \zeta _{U}\right) =D^{\frac{s_{k,n}}{2}}%
\binom{-s_{k,n}}{k}\frac{\left( -1\right) ^{k}}{\log a}.
\end{equation*}

If for some $k\geq 1$ and $n\in \mathbb{Z}$ we have $-s_{k,n} \in \{0,1,\ldots,k-1\}$, then $\binom{-s_{k,n}}{k}=0$ and $s_{k,n}$ is not a pole of $\zeta _{U}\left( s\right) $. A simple computation shows that $\zeta_U(s)$ possesses a simple pole at $s=0$ with the Laurent series expansion given by
\begin{equation}\label{eq: series for zetaU at s=0}
  \zeta _{U}\left( s\right) =\frac{1}{s\log a}+\frac{1}{2}\left( \frac{\log D}{\log a}-1\right) +O\left( s\right), \text{  as  } s\to 0.
\end{equation}

In the lemma below we summarize the cases at which $\zeta _{U}\left( s\right)$ possesses poles at negative integers.

\begin{lemma}\label{lem: poles at neg int}
  With the notation as above, under assumption \eqref{assumption}, the Lucas zeta function $\zeta _{U}\left( s\right) $ possesses poles at $s=-\ell,$ for some $\ell\in\mathbb{N}$ if and only if:
  \begin{itemize}
    \item[(i)] $Q=1$ and $\ell=2k$ is an even positive integer;
    \item[(ii)] $Q=-1$ and $\ell=4k$ is a positive integer divisible by $4$;
    \item [(iii)] $Q>0$, $b=a^{-p/q}$ for a non-negative rational number $p/q$ and
$$
\ell =\left\{
\begin{array}{l}
k_{1}(1+p/q)=-s_{k,0}\text{, for some }k=qk_{1}\in \mathbb{N}\text{, when }%
p\neq 0, \\
k\text{, }k\in \mathbb{N}\text{, when }p=0;%
\end{array}%
\right.
$$
    \item [(iv)] $Q<0$,  $b=-a^{-p/q}$ for a non-negative rational number $p/q$ and
$$
\ell =\left\{
\begin{array}{l}
2k_{1}(1+p/q)=-s_{k,-k/2}\text{, for some }k=2qk_{1}\in \mathbb{N}\text{,
when }p\neq 0, \\
2k,\text{ }k\in \mathbb{N}\text{, when }p=0.%
\end{array}%
\right.
$$
  \end{itemize}
In each of the four cases listed above the constant term the Laurent series expansion of $\zeta _{U}\left( s\right) $ at  $s=s_{k,0}=-\ell$ (for $Q>0$) or $s=s_{k,-k/2}=-\ell$ (for $Q<0$) is given by:
\begin{multline}\label{eq: const term}
  \mathrm{CT}_{s=-\ell}\zeta_U(s)= D^{-\ell/2}\binom{\ell}{k}(-1)^k\left( \frac{\log D - \log a}{2\log a} + \frac{H_{\ell-k} - H_{\ell}}{\log a}\right) +\\ + D^{-\ell/2}\sum_{j=0, j\neq k}^{\ell}\binom{\ell}{j}(-1)^j\frac{a^{\ell-j} b^j}{1-a^{\ell-j} b^j},
\end{multline}
where $H_n:=\sum_{j=1}^{n} \frac{1}{j}$ denotes the $n$th harmonic number and $H_0:=0$.
   \end{lemma}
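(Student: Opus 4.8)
The plan is to extract the statement directly from Kamano's meromorphic continuation formula \eqref{eq: zeta cont Kamano}, combined with the explicit formula for the poles $s_{k,n}$ in \eqref{eq: s k,n defn} and the residue formula that follows it. First I would ask: when does $s_{k,n}$ land on a negative integer? Writing $s_{k,n}=-\ell$ forces two conditions: the imaginary part must vanish, and the real part must equal $-\ell$. The imaginary part $\frac{(2n+l_{Q,k})\pi}{\log a}$ vanishes exactly when $2n+l_{Q,k}=0$, which (recalling $l_{Q,k}=0$ for $Q>0$ and $l_{Q,k}=k$ for $Q<0$) gives $n=0$ when $Q>0$, and $n=-k/2$ when $Q<0$ (so $k$ must be even in the latter case). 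With the imaginary part killed, the real-part equation becomes $-\ell = -2k + \frac{k\log|Q|}{\log a}$, i.e. $\ell = k\bigl(2 - \frac{\log|Q|}{\log a}\bigr) = k\cdot\frac{\log(a^2/|Q|)}{\log a} = k\cdot\frac{\log(a/|b|)}{\log a}$, using $ab=Q$ so $|Q| = a|b|$ and $a^2/|Q| = a/|b|$. This is the key algebraic reduction.

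Next I would translate $\ell = k\log(a/|b|)/\log a$ into the case distinctions. If $|Q|=1$ then $|b|=1/a$, so $\log(a/|b|)/\log a = 2$, giving $\ell = 2k$ — this yields cases (i) and (ii), where for $Q<0$ one must additionally remember $k$ is even, say $k=2k'$, so $\ell=4k'$. If $|Q|\neq 1$, rationality of $\ell/k$ forces $\log(a/|b|)/\log a \in \mathbb{Q}$, equivalently $|b| = a^{-p/q}$ for a nonnegative rational $p/q$ (with $p\neq 0$ since $|Q|\neq 1$ rules out $|b|=1$... wait, one should be careful: $|b|=a^{-p/q}$ with $p=0$ means $|b|=1$, i.e. $|Q|=a$, which is a separate sub-case handled as "$p=0$" in the statement). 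Then $\log(a/|b|)/\log a = 1+p/q$ and $\ell = k(1+p/q)$; requiring $\ell\in\mathbb{N}$ forces $q\mid k$, giving $k=qk_1$ and $\ell = k_1(q+p) $; writing this as $k_1(1+p/q)$ with the constraint identifies it as $-s_{k,0}$ for $Q>0$. For $Q<0$ the same computation runs but with the extra evenness $k=2qk_1$, producing $\ell = 2k_1(1+p/q)$ and $-s_{k,-k/2}$. I would also need to confirm that in each such case $\binom{-s_{k,n}}{k}=\binom{\ell}{k}\neq 0$ (which holds since $\ell\geq k$ whenever $p\geq 0$), so that $s_{k,n}$ genuinely is a pole and not a removable point.

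For the constant term formula \eqref{eq: const term}, I would isolate the $k$th summand of the second series in \eqref{eq: zeta cont Kamano}, namely $D^{s/2}\binom{-s}{k}(-1)^k \frac{a^{-s-k}b^k}{1-a^{-s-k}b^k}$, which is the unique term that becomes singular at $s=-\ell$ (since $a^{-s-k}b^k\to a^{\ell-k}b^k$ and by the pole condition $a^{\ell-k}|b|^k = a^{\ell-k}\cdot a^{-(\ell-k)}=1$... checking: $|b|^k = a^{-pk/q}$ and... one verifies $a^{\ell-k}b^k = \pm 1$ exactly at the pole). I would expand each of the three $s$-dependent factors $D^{s/2}$, $\binom{-s}{k}$, and $\frac{1}{1 - a^{-s-k}b^k}$ in Laurent/Taylor series about $s=-\ell$: the first two are holomorphic and contribute $D^{-\ell/2}$, $\binom{\ell}{k}$, plus first-order terms $\frac{1}{2}\log D$ and $\binom{\ell}{k}(H_\ell - H_{\ell-k})$ respectively (the derivative of $\binom{-s}{k}$ at $s=-\ell$ produces the harmonic-number difference via logarithmic differentiation of $\frac{(-s)(-s-1)\cdots(-s-k+1)}{k!}$), while $\frac{1}{1-a^{-s-k}b^k}$ has a simple pole with residue $\frac{1}{\log a}$ and a computable constant term $-\frac12$. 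Multiplying these expansions and collecting the $s^0$ coefficient gives the first line of \eqref{eq: const term}; the remaining sum over $j\neq k$ is just the value at $s=-\ell$ of the regular part of the series, which is manifestly $D^{-\ell/2}\sum_{j\neq k}\binom{\ell}{j}(-1)^j \frac{a^{\ell-j}b^j}{1-a^{\ell-j}b^j}$.

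The main obstacle I anticipate is the bookkeeping in the constant-term computation: one must be careful that the series \eqref{eq: zeta cont Kamano} can be differentiated term by term near $s=-\ell$ (uniform convergence on a punctured neighborhood, which follows from the growth estimates behind Lemma \ref{DIR}(ii) and Kamano's analysis), that no other term $j\neq k$ contributes a pole at $s=-\ell$ (this uses that the $s_{j,n}$ for different $j$ lie on distinct vertical lines unless forced together, and under \eqref{assumption} with the rationality hypothesis only the single index $k$ hits $-\ell$ — a point worth a sentence of justification), and that the three local expansions are combined in the right order. The pole-classification part, by contrast, is essentially the elementary algebra sketched above.
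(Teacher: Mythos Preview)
Your approach is essentially the same as the paper's: reduce $s_{k,n}=-\ell$ to the vanishing of the imaginary part (forcing $n=0$ or $n=-k/2$) and the real-part identity $\ell=k\bigl(1-\tfrac{\log|b|}{\log a}\bigr)$, then expand the $k$th summand of \eqref{eq: zeta cont Kamano} around $s=-\ell$ for the constant term (the paper uses the digamma function where you use direct logarithmic differentiation, but these are equivalent and yield the same harmonic-number expression).

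There is one genuine gap in your ``only if'' direction. You write that rationality of $\log|b|/\log a$ is ``equivalently $|b|=a^{-p/q}$ for a nonnegative rational $p/q$'', but rationality alone only gives $|b|=a^{r}$ for some rational $r<1$; it does not force $r\le 0$. You later check that $\binom{\ell}{k}\ne 0$ when $p\ge 0$, but you never run the argument in the other direction: if $|b|=a^{r}$ with $0<r<1$ (i.e.\ $|b|>1$), then $\ell=k(1-r)$ lies strictly between $0$ and $k$, so whenever it is an integer one has $\binom{\ell}{k}=0$ and the residue vanishes---hence no pole. This is exactly how the paper excludes the case $|b|>1$ and arrives at the parametrization $|b|=a^{-p/q}$ with $p/q\ge 0$. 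You have the ingredient (the binomial coefficient governing whether the pole is genuine) but deploy it only to confirm poles, not to rule them out. A minor side remark: your first-order coefficient for $\binom{-s}{k}$ should be $\binom{\ell}{k}(H_{\ell-k}-H_{\ell})$, matching the sign in \eqref{eq: const term}.
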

\begin{proof}
Parts (i) and (ii) follow from \cite[Prop. 6]{Kamano}. Let us prove (iii). In order that $-\ell$ is a pole of $\zeta _{U}\left( s\right) $, there must exist $n\in \mathbb{Z}$ and $k\geq 0$ such that $- \ell=s_{n,k}$. For $Q>0$ (and hence $b>0$) this is fulfilled if and only if $n=0$ and
$$
\ell=k\left(1-\frac{\log b}{\log a}\right).
$$
Therefore, $-\ell$ is a pole of $\zeta _{U}\left( s\right) $ if and only if $\frac{\log b}{\log a}$ is a rational number and
$$k\left(1-\frac{\log b}{\log a}\right) \notin \{0,1,\ldots,k-1\}.$$
If $b=a^r$ for some positive rational number $r$ (obviously, $r<1,$ since $a>b)$ then for all positive integers $k$ such that $k(1-r)$ is a positive integer, this integer belongs to the set $\{0,1,\ldots,k-1\}$ and hence $-k(1-r)$ is not a pole of $\zeta_U(s)$. Therefore, $-\ell$ is a pole of $\zeta _{U}\left( s\right) $ if and only if $b=a^{-p/q}$ for some non-negative rational number $p/q$ (we assume $p,q$ to be relatively prime integers or $p=0$). When $p=0$, then $\ell=k$, for $k\in \mathbb{N}$. When $p/q>0$, obviously $\ell=k(1+p/q)$ and this is an integer if and only if $k\equiv 0 (\mathrm{mod}\ q)$. This proves part (iii).

The proof of part (iv) is similar. When $Q<0$, we have $l_{Q,k}=k$ and hence $s_{k,n}$ can be a negative integer only if $k=2m$ is an even positive integer and $n=-m$. In this case $b<0$ and
$$
-s_{2m,-m}=2m\left(1-\frac{\log |b|}{\log a}\right).
$$
reasoning as above completes the proof of (iv).

Now, it is left to prove that the constant term in all situations (i)--(iv) is given by \eqref{eq: const term}. We prove this for $Q>0$, $b=a^{-p/q}$ for a rational number $p/q\geq 0$ and $\ell=k(p+q)/q=-s_{k,0}$ for some $k=qk_1\in \mathbb{N}$. For such $k$ and $\ell$ we have, as $s\to -\ell$:
\begin{equation} \label{eq: fraction exp}
\frac{a^{-s-k}b^k}{1-a^{-s-k}b^{k}}=\frac{1-(s+\ell)\log a + O((s+\ell)^2)}{(s+\ell)\log a \left( 1-\tfrac{1}{2}(s+\ell)\log a  + O(s+\ell)^2\right)},
\end{equation}
\begin{equation}\label{eq: D exp}
  D^{s/2}= D^{-\ell/2} \left(1+\tfrac{1}{2}(s+\ell)\log D  + O((s+\ell)^2)\right)
\end{equation}
and
\begin{eqnarray} \label{eq: binomial exp}
  \binom{-s}{k} &=& \frac{\Gamma(-(s+\ell)+\ell +1)}{k! \Gamma(-(s+\ell) +\ell +1-k)} \\ \nonumber
   &=& \binom{\ell}{k}\left(1+(s+\ell)(\psi(\ell+1-k) - \psi(\ell+1)) + O((s+\ell)^2) \right),
\end{eqnarray}
where $\psi(x)$ denotes the digamma function. By letting $s\to -\ell$ we get
\begin{multline*}
\lim_{s\to -\ell}\left(\zeta_U(s) -D^{-\ell/2}\binom{\ell}{k}\frac{\left( -1\right) ^{k}}{(s+\ell)\log a}\right)=  D^{-\ell/2}\sum_{j=0, j\neq k}^{\ell}\binom{\ell}{j}(-1)^j\frac{a^{\ell-j} b^j}{1-a^{\ell-j} b^j} +\\+ \lim_{s\to -\ell}\left( D^{s/2}\binom{-s}{k} (-1)^k \frac{a^{-s-k}b^k}{1-a^{-s-k}b^{k}}-D^{-\ell/2}\binom{\ell}{k}\frac{\left( -1\right) ^{k}}{(s+\ell)\log a}\right).
\end{multline*}
Therefore, it is left to compute the limit on the right-hand side of the above equation. By multiplying \eqref{eq: fraction exp}, \eqref{eq: D exp} and \eqref{eq: binomial exp} we deduce that this limit equals
$$
D^{-\ell/2}\binom{\ell}{k}\frac{(-1)^k}{\log a}\left( \frac{\log D - \log a}{2} + \left(\psi(\ell+1-k) - \psi(\ell+1)\right)\right).
$$
Using the functional equation $\psi(x+1)=\psi(x)+1/x$ for the digamma function and the fact that $\psi(1)=-\gamma$, where $\gamma$ is the Euler constant, we complete the proof of \eqref{eq: const term} in case (iii). The proof in all other cases is similar, so we omit it.
\end{proof}

\begin{example}\rm
The zeta function associated to Jacobstahl numbers $J_n:=U_n(1,-2)$ is such that $Q<0$, $a=2$, $b=-1$, hence $\log |b|=0$ and the case (iv) of the above lemma with $p/q=0$ applies to deduce that the Jacobstahl zeta function $\zeta_{\{J_n\}}(s):=\sum_{n=1}^{\infty} J_n^{-s}$, initially defined for $\Re(s)>0$ possesses meromorphic continuation to the whole complex plane with poles at points $s=-k+(2n+k)\pi i /\log 2$, $k\geq 0$, $n\in\mathbb{Z}$. Specially, this function possesses poles at all non-positive even integers $-2\ell$, $\ell\in\mathbb{N}_0$.
\end{example}

\section{The theta function associated to the Lucas sequence}

In this section we derive properties of the theta function $\theta_U(t)$ associated to the Lucas sequence of the first kind, and defined by \eqref{eq: theta def}. The following lemma describes the asymptotic behavior of $\theta_U(t)$ for large and small values of $t$ and shows that the series \eqref{eq: theta def} converges uniformly in $t$ on every set $[t_0,\infty)$, for $t_0 >0$.

\begin{lemma}
\label{lem2} The theta function $\theta
_{U}\left( t\right) $ possesses the following properties:

\begin{itemize}
\item[(i)] For given numbers $C,t_{0}\in \mathbb{R}_{\geq 0}$, there
exist $N\in \mathbb{N}$ and $K\in \mathbb{R}_{\geq 0}$ such that $\left\vert
\sum\limits_{n=N}^{\infty }e^{-U_{n}t}\right\vert \leq Ke^{-Ct}\quad \text{%
for}\quad t\geq t_{0}$ . 

\item[(ii)] For $\delta\in(0,1)$, there exist $\alpha ,C\in \mathbb{R}_{\geq 0}$ such that the inequality $\left\vert
\sum\limits_{n=N}^{\infty }e^{-U_{n}t}\right\vert \leq \frac{C}{t^{\alpha }}$ holds true for all $N\in \mathbb{N}$ and $t\in \left( 0,\delta \right] $.
\end{itemize}
\end{lemma}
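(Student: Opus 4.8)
The plan is to reduce both parts to a single growth estimate for $U_n$ and then to elementary bounds for a geometric-series theta function. First I would record that $U_n$ grows essentially like $a^n$: writing $U_n=\tfrac{a^n}{a-b}\bigl(1-(b/a)^n\bigr)$ and using $a>|b|>0$ from Lemma~\ref{DIR}(i), the bracketed factor tends to $1$, so there exist $c_1>0$ and $N_0\in\mathbb{N}$ with $U_n\ge c_1 a^n$ for all $n\ge N_0$. In particular $\theta_U(t)=\sum_{n\ge0}e^{-U_nt}$ converges for every $t>0$, by comparison with $\sum_{n\ge0}e^{-c_1 a^nt}$, and all of its terms are positive; I will use both facts below.

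For part~(i) I would fix $C\ge0$ and $t_0>0$ (the tail series diverges at $t=0$, so the stated inequality can hold only for $t_0>0$) and use Lemma~\ref{DIR}(iii) to choose $N$ with $U_n\ge C$ for all $n\ge N$. Then for $t\ge t_0$ and $n\ge N$ one has $e^{-U_nt}=e^{-Ct}e^{-(U_n-C)t}\le e^{-Ct}e^{-(U_n-C)t_0}$, since $U_n-C\ge0$; summing over $n\ge N$ pulls out the factor $e^{-Ct}$ and leaves the finite, $t$-independent constant $e^{Ct_0}\sum_{n\ge N}e^{-U_nt_0}$, which may be taken as $K$. The only thing to be careful about is to use the restriction $t\ge t_0$ in order to evaluate the slowly-summable part at the fixed value~$t_0$.

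For part~(ii) I would first observe that, since all terms are positive, $\bigl|\sum_{n=N}^{\infty}e^{-U_nt}\bigr|\le\theta_U(t)$ for every $N\in\mathbb{N}$, so it suffices to bound $\theta_U(t)$ on $(0,\delta]$ by $Ct^{-\alpha}$ with $C$ and $\alpha$ independent of $N$. I would split off the first $N_0$ terms of $\theta_U(t)$, each of which is $\le1$, and bound the remaining tail by $\sum_{n\ge0}e^{-c_1 a^nt}$. For the latter, let $m(t)$ be the least integer $n\ge0$ with $c_1 a^nt\ge1$; the $m(t)$ terms with $n<m(t)$ are each $\le1$, while for $n\ge m(t)$ one has $c_1 a^nt\ge a^{\,n-m(t)}$, so those terms sum to at most $\sum_{j\ge0}e^{-a^j}=:S<\infty$. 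Hence $\theta_U(t)\le N_0+m(t)+S$ on $(0,\delta]$, and since $m(t)\le 1+\log\!\bigl(1/(c_1t)\bigr)/\log a$ this is a bound of the form $\theta_U(t)\le A+B\log(1/t)$ with constants $A,B$. Finally, $\log(1/t)\le\alpha^{-1}t^{-\alpha}$ on $(0,1)$ for any fixed $\alpha>0$, and on $(0,\delta]$ the additive constant is absorbed into $t^{-\alpha}$ because $\delta<1$; this yields~(ii) with any $\alpha>0$ and a suitable $C$.

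The genuine content --- and the only mildly delicate point --- is the logarithmic bound $\theta_U(t)\le A+B\log(1/t)$ for small $t$: unlike a classical theta function, $\theta_U$ does not blow up like a negative power of $t$ as $t\downarrow0$, and it is precisely this (together with the trivial inequality $\log(1/t)\le\alpha^{-1}t^{-\alpha}$) that makes available the uniform power estimate in~(ii), which is what will be needed for the Mellin-inversion argument in the next section. Everything else is routine bookkeeping with geometric series.
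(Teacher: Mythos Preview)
Your argument is correct. Part~(i) is exactly the paper's approach: choose $N=n_C$ from Lemma~\ref{DIR}(iii) and take $K=\sum_{n\ge N}e^{-(U_n-C)t_0}$. (Your remark that the statement tacitly requires $t_0>0$ is a fair observation.)

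For part~(ii), however, you take a genuinely different route. The paper dispatches it in one line: from the elementary inequality $x^{\beta}e^{-x}\le c_\beta$ (valid for all $x\ge0$) one gets $e^{-U_nt}\le c_\beta(U_nt)^{-\beta}$, and summing yields
\[
\sum_{n\ge N}e^{-U_nt}\ \le\ c_\beta\, t^{-\beta}\sum_{n\ge 1}U_n^{-\beta}\ =\ c_\beta\,\zeta_U(\beta)\,t^{-\beta},
\]
which is finite for any $\beta>0$ by Lemma~\ref{DIR}(ii). Your argument instead establishes the sharper intermediate estimate $\theta_U(t)\le A+B\log(1/t)$ by splitting the geometric theta sum at the index where $c_1a^nt$ crosses $1$, and only then relaxes to a power bound via $\log(1/t)\le\alpha^{-1}t^{-\alpha}$. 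Both are valid; the paper's approach is shorter and uses only the abstract convergence of $\zeta_U$, while yours extracts the true logarithmic growth of $\theta_U(t)$ as $t\downarrow0$, which is consistent with (and anticipates) the leading $-\tfrac{\log t}{\log a}$ term in the asymptotic expansion of Theorem~\ref{thm: theta exp}.
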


\begin{proof}
Proof is similar to proof of \cite[Thm. 1.12]{JOrgenson i Lang}, so we give only a sketch here.

Part (i) follows from Lemma \ref{DIR} by taking $N=n_C$ and $K= \sum\limits_{n=n_C}^{\infty }e^{-(U_{n}-C)t_0}$.

Part (ii) follows from Lemma \ref{DIR} combined with the inequality $x^{\beta}e^{-x}\leq c$ which holds true for any $x\geq 0$, $\beta>0$, with a constant $c$ depending only upon $\beta$.
\end{proof}

In view of Lemma \ref{lem: poles at neg int}, for an arbitrary, fixed $k\in\mathbb{N}_0$ and $a,b$ as above, we define the set $A_{a,b}(k)$ by setting
$$
A_{a,b}(k)=\left\{
           \begin{array}{ll}
             \mathbb{Z}, & \text{ if  } \frac{\log|b|}{\log a}k\notin \mathbb{Z}_{\leq 0} \text{ or } \left(Q=ab <0 \text{ and } k \text{ is odd}\right); \\
              \mathbb{Z}\setminus\{0\}, & \text{ if  } \frac{\log|b|}{\log a}k\in \mathbb{Z}_{\leq 0} \text{ and } Q=ab>0; \\
             \mathbb{Z}\setminus\{-k/2\}, & \text{ if  } \frac{\log|b|}{\log a}k\in \mathbb{Z}_{\leq 0} \text{ and } Q=ab<0 \text{ and } k \text{ is even}.
           \end{array}
         \right.
$$
For a positive integer $m$ we also define the subset $B(m,a,b)$ of the set $\{1,2,\ldots,2m\}$ to be the set of numbers $\ell\in \{1,2,\ldots,2m\}$ for which there exists $j\in\mathbb{N}$ such that
$$
\ell=\left\{
           \begin{array}{ll}
            j\left(1-\frac{\log|b|}{\log a}\right) , & \text{ if  } \frac{\log|b|}{\log a}j\in \mathbb{Z}_{\leq 0} \text{ and } Q=ab>0; \\
             j\left(1-\frac{\log|b|}{\log a}\right) , & \text{ if  } \frac{\log|b|}{\log a}j\in \mathbb{Z}_{\leq 0} \text{ and } \text{ } Q=ab<0 \text{ and } j \text{ is even}.
           \end{array}
         \right.
$$
If the value of $j$ described above does not exist, then, by definition $B(m,a,b)=\emptyset$. With this notation, we can state and prove our first main result in which we deduce the asymptotic expansion of $\theta_U(t)$ as $t\downarrow 0$.

\begin{thm} \ \label{thm: theta exp}
Let $m\geq 1$ be an integer. When $t\downarrow 0$, we have the following
asymptotic expansion for $\theta _{U}\left( t\right) $
\begin{multline}  \label{eq: theta asympt}
\theta _{U}\left( t\right) =\frac{\log D+\log a-2\gamma }{%
2\log a}-\frac{\log t}{\log a}+\sum\limits_{k=0}^{M}c_{a,b}\left(
k,t\right) t^{k\left( 1-\frac{\log |b|}{\log a}\right) -\frac{l_{Q,k}\pi i}{\log a%
}}+\\
+\sum\limits_{\ell\in B(m,a,b)}(d_{a,b}\left( \ell\right)-\tilde{d}_{a,b}\left(\ell \right)\log t ) t^{\ell}+ \sum_{\ell\in\{1,\ldots,2m\}\setminus B(m,a,b) }\frac{(-1)^\ell}{\ell!}\zeta_U(-\ell)t^\ell  +O\left( t^{2m+c_{0}}\right),
\end{multline}
where $M=\lfloor2m/\left(1-\frac{\log|b|}{\log a}\right) \rfloor$, $c_0 >0$ is an absolute constant which depends only upon the zeta function,
\begin{equation} \label{eq: defn of c a,b}
c_{a,b}\left( k,t\right) =\frac{D^{\frac{k}{2}\left( \frac{\log |b|}{\log a}%
-1\right) +\frac{l_{Q,k}\pi i}{2\log a}}}{k!\log a}\sum\limits_{n\in A_{a,b}(k)}D^{\frac{n\pi i}{\log a}}\Gamma \left( \frac{\log |b|}{\log a}k+\frac{%
\left(2n+l_{Q,k}\right)\pi i}{\log a}\right) t^{-\frac{2n\pi i}{\log a}},
\end{equation}%
and $d_{a,b}\left( \ell\right) $, $\tilde{d}_{a,b}\left(\ell\right)$, for $\ell= k\left(1-\frac{\log|b|}{\log a}\right)\in B(m,a,b)$ are given by
\begin{multline} \label{eq: defn of d a,b}
d_{a,b}\left( \ell\right) =\frac{(-1)^\ell}{\ell!D^{\ell/2}}\left[\frac{(-1)^k}{2\log a}\binom{\ell}{k}\left(\log D-\log a +2(H_{\ell-k}-\gamma)\right) + \right. \\ +\left. \sum_{j\in\{0,\ldots,\ell\}\setminus\{k\}}\binom{\ell}{j}\frac{(-1)^ja^{\ell-j}b^j}{1-a^{\ell-j}b^j} \right]
\end{multline}
and
\begin{equation} \label{eq: defn of d a,b tilde}
\tilde{d}_{a,b}(\ell)=\frac{(-1)^{\ell+k}}{\ell!}D^{-\ell/2}\frac{1}{\log a}\binom{\ell}{k}.
\end{equation}
When $B(m,a,b)=\emptyset$, the sum over $\ell\in B(m,a,b)$ on the right hand side of \eqref{eq: theta asympt} is identically zero.

Moreover, the series \eqref{eq: defn of c a,b} is absolutely convergent and uniformly bounded by a constant independent of $t$, as $t\downarrow 0$.
\end{thm}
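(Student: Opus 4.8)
The plan is to obtain the asymptotic expansion by applying the Mellin inversion formula to $\theta_U(t)$ and then shifting the contour to the left, picking up residues at the poles of $\Gamma(s)\zeta_U(s)$. First I would record that for $\Re(s)$ large enough the Mellin transform of $\theta_U(t)$ is $\Gamma(s)\zeta_U(s)$; this requires only Lemma~\ref{lem2} to justify interchanging sum and integral, together with Lemma~\ref{DIR}(ii) for convergence of $\zeta_U(s)$. Hence, for a suitable real $\sigma_0>0$,
\begin{equation*}
\theta_U(t)=\frac{1}{2\pi i}\int_{(\sigma_0)}\Gamma(s)\zeta_U(s)\,t^{-s}\,ds.
\end{equation*}
The integrand is meromorphic on all of $\mathbb{C}$ by \eqref{eq: zeta cont Kamano}: it has the poles of $\Gamma$ at $s=0,-1,-2,\ldots$ and the poles of $\zeta_U$ at the points $s_{k,n}$ in \eqref{eq: s k,n defn}, all lying in $\Re(s)\le 0$ on the vertical lines $\Re(s)=-k(1-\tfrac{\log|b|}{\log a})$. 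The strategy is to move the line of integration from $\Re(s)=\sigma_0$ to $\Re(s)=-2m-c_0$, where $c_0>0$ is chosen small enough that the new vertical line avoids all the lines $\Re(s)=-k(1-\tfrac{\log|b|}{\log a})$ for the relevant range of $k$ (this is possible because those real parts form a discrete set — an absolute constant depending only on $a,b$ governs the gaps, which is where the $c_0$ ``which depends only upon the zeta function'' comes from). The shifted integral is then $O(t^{2m+c_0})$, which will give the error term.

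Next I would classify the residues picked up in the strip $-2m-c_0<\Re(s)<\sigma_0$. There are three types. (a) At $s=0$, $\Gamma$ and $\zeta_U$ both have simple poles, so the integrand has a double pole; using \eqref{eq: series for zetaU at s=0} and the Laurent expansion $\Gamma(s)=1/s-\gamma+O(s)$ together with $t^{-s}=1-s\log t+O(s^2)$, the residue is exactly $\frac{\log D+\log a-2\gamma}{2\log a}-\frac{\log t}{\log a}$, the first two terms of \eqref{eq: theta asympt}. (b) At the poles $s_{k,n}$ with $k\ge 1$ that are \emph{not} negative integers, $\Gamma(s)$ is regular and $\zeta_U$ has a simple pole; the residue is $\Gamma(s_{k,n})\,\mathrm{Res}_{s=s_{k,n}}\zeta_U\cdot t^{-s_{k,n}}$, and summing over all admissible $n$ (i.e.\ $n\in A_{a,b}(k)$, the set excluding exactly the $n$ that would make $s_{k,n}$ a negative integer) reproduces, after substituting $D^{s_{k,n}/2}\binom{-s_{k,n}}{k}(-1)^k/\log a$ for the residue and collecting the $D$-powers, the term $c_{a,b}(k,t)\,t^{k(1-\log|b|/\log a)-l_{Q,k}\pi i/\log a}$ with $c_{a,b}(k,t)$ as in \eqref{eq: defn of c a,b}. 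The range of $k$ is $0\le k\le M=\lfloor 2m/(1-\log|b|/\log a)\rfloor$ since for larger $k$ the line $\Re(s)=-k(1-\log|b|/\log a)$ lies left of $-2m-c_0$. (c) At a negative integer $s=-\ell$ with $1\le\ell\le 2m$: if $-\ell$ is not of the form $s_{k,0}$ or $s_{k,-k/2}$ (equivalently $\ell\notin B(m,a,b)$), only $\Gamma$ contributes a simple pole and the residue is $\frac{(-1)^\ell}{\ell!}\zeta_U(-\ell)\,t^\ell$; if $\ell\in B(m,a,b)$, both factors have simple poles, giving a double pole whose residue is computed from \eqref{eq: series for zetaU at s=0}'s analogue at $-\ell$ — precisely the Laurent data in Lemma~\ref{lem: poles at neg int}, namely \eqref{eq: const term} — combined with $\Gamma(s)=\frac{(-1)^\ell}{\ell!}\big(\frac{1}{s+\ell}+\psi(\ell+1)+O(s+\ell)\big)$ and $t^{-s}=t^\ell(1-(s+\ell)\log t+\cdots)$. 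Expanding this product and matching the coefficient of $t^\ell$ and of $t^\ell\log t$ yields $d_{a,b}(\ell)$ and $\tilde d_{a,b}(\ell)$ exactly as in \eqref{eq: defn of d a,b}--\eqref{eq: defn of d a,b tilde}; here I would use $\psi(\ell+1)=H_\ell-\gamma$ and the residue formula $\mathrm{Res}_{s=-\ell}\zeta_U=D^{-\ell/2}\binom{\ell}{k}(-1)^k/\log a$ to simplify, noting that the $H_\ell$ from $\psi(\ell+1)$ cancels against the $H_\ell$ hidden in \eqref{eq: const term}, leaving only $H_{\ell-k}$.

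For the final ``moreover'' clause, I would argue that the series \eqref{eq: defn of c a,b} converges absolutely and is bounded uniformly in $t$ as $t\downarrow 0$: since $|t^{-2n\pi i/\log a}|=1$ for real $t>0$, the modulus of the $n$th term is a fixed constant times $\big|\Gamma\!\big(\tfrac{\log|b|}{\log a}k+\tfrac{(2n+l_{Q,k})\pi i}{\log a}\big)\big|$, and Stirling's formula gives exponential decay $e^{-\pi^2|n|/\log a}$ in $|n|$, so the sum over $n\in A_{a,b}(k)$ converges absolutely with a bound independent of $t$. This same Stirling estimate is also what controls the shifted integral on $\Re(s)=-2m-c_0$, making the interchange of shifting and summing legitimate. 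The main obstacle will be the bookkeeping of type (c): correctly extracting the double-pole residue at $s=-\ell$ for $\ell\in B(m,a,b)$ and verifying that the constant term reorganizes into $d_{a,b}(\ell)$ — in particular, checking the harmonic-number cancellation and the sign conventions — and, relatedly, confirming that the index sets $A_{a,b}(k)$ and $B(m,a,b)$ are exactly the right ones so that every pole in the strip is counted once and only once, with no double-counting at points where a line $\Re(s)=-k(1-\log|b|/\log a)$ happens to pass through a negative integer.
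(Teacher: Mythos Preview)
Your proposal is correct and follows essentially the same route as the paper's proof: Mellin inversion, a contour shift to $\Re(s)=-2m-c_0$, and residue collection at precisely the three types of poles you describe, with Stirling controlling both the convergence of \eqref{eq: defn of c a,b} and the shifted integral. Two small points to tighten when you write it out: the Mellin inversion actually yields $\theta_U(t)-1$ on the left (the $n=0$ term of $\theta_U$ contributes the constant $1$), so the residue at $s=0$ is $\frac{\log D-\log a-2\gamma}{2\log a}-\frac{\log t}{\log a}$ and adding back the $1$ restores the stated constant; and to justify the contour shift you need a growth bound for $\zeta_U$ on vertical lines far to the left---the paper obtains $|\zeta_U(-2m-c_0+i\tau)|\ll e^{|\tau|}$ directly from Kamano's series \eqref{eq: zeta cont Kamano}, which combined with Stirling on $\Gamma$ gives an integrable $e^{(1-\pi/2)|\tau|}$ decay.
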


\begin{proof}
The function $\zeta _{U}\left( s\right) $ is holomorphic and absolutely convergent
for $\Re(s) >0$, hence the Mellin inversion formula applied to $\zeta
_{U}\left( s\right) \Gamma \left( s\right)$ yields the representation
\begin{equation*}
\theta _{U}\left( t\right)-1 =\frac{1}{2 \pi i}\int\limits_{c-i\infty }^{c+i\infty }\zeta
_{U}\left( s\right) \Gamma \left( s\right) t^{-s}ds = \frac{1}{2 \pi i} \underset{T\rightarrow \infty }{\lim }%
 \int\limits_{c-iT}^{c+iT}\zeta _{U}\left( s\right) \Gamma \left( s\right)
t^{-s}ds,
\end{equation*}%
valid for some $c\in \mathbb{R}_{> 0}$.

The gamma function $\Gamma \left( s\right) $ has simple poles at the negative integers and zero. The Laurent series expansion of $\Gamma \left( s\right) $ at $s=-\ell$ ($\ell\in \mathbb{N}_{0}$) is given by
\begin{equation}\label{eq: gamma exp}
\Gamma(s)=\frac{\left( -1\right) ^{\ell}}{\ell!}\left( \frac{1}{s+\ell} + H_\ell-\gamma \right)+ O(s+\ell), \text{  as  } s+\ell\to 0.
\end{equation}
Therefore, the function $\zeta _{U}\left( s\right) \Gamma \left( s\right)
t^{-s}$ has a pole of order $2$ at $s=0$, poles of order at most 2 at negative integers and at most simple poles at $s=s_{k,n}$  defined by \eqref{eq: s k,n defn}, when $\frac{k\log
\left\vert Q\right\vert }{\log a}\notin \mathbb{\mathbb{Z}}$.

Let the constant $c_{0}\in\left( 0,1\right) $ be chosen so that $-2m-c_{0}$ does not coincide with poles of
$\zeta _{U}\left( s\right) \Gamma \left( s\right) t^{-s}$ for every $m\in
\mathbb{N}$. Let $T>0$ be such that the function $\zeta _{U}\left( s\right)
\Gamma \left( s\right) t^{-s}$ is analytic on the boundary of the rectangle $R(c,m,c_0,T)$ with vertices $%
c\pm iT$ and $-2m-c_{0}\pm iT$ and meromorphic inside it. The Residue Theorem gives us
\begin{multline} \label{Residue Th}
\int\limits_{c-iT}^{c+iT}\zeta _{U}\left( s\right) \Gamma
\left( s\right) t^{-s}ds = 2\pi i\sum_{s\in R(c,m,c_0,T)} \mathrm{Res}\left( \zeta _{U}\left(
s\right) \Gamma \left( s\right) t^{-s}\right)
\\-\int\limits_{c+iT}^{-2m-c_{0}+iT}\zeta _{U}\left( s\right) \Gamma \left(
s\right) t^{-s}ds -\int\limits_{-2m-c_{0}-iT}^{c-iT}\zeta _{U}\left(
s\right) \Gamma \left( s\right)
t^{-s}ds+\int\limits_{-2m-c_{0}-iT}^{-2m-c_{0}+iT}\zeta _{U}\left( s\right)
\Gamma \left( s\right) t^{-s}ds\text{ .}
\end{multline}
For large real numbers $x,y$, from \cite[formula 8.328 on p. 904]{knjiga integrala} we have the estimate
\begin{equation} \label{eq. gamma estimate}
\left\vert \Gamma \left( x+iy\right) \right\vert \sim \sqrt{2\pi }e^{-\frac{\pi }{2}\left\vert y\right\vert }\left\vert y\right\vert ^{-\frac{1}{2}+x}.
\end{equation}
This, combined with the fact that $|b|/a <1$ yields that for any $\tau\in\mathbb{R}$ one has
\begin{align*}
 |\zeta_U(-2m-c_0+i\tau)|&\leq  D^{-\frac{2m+c_0}{2}}\sum_{k=0}^\infty \frac{|\Gamma(2m+c_0+1-i\tau)|}{k! |\Gamma(2m+c_0+1-k-i\tau)|}\frac{a^{2m+c_0} (|b|/a)^k}{\left|1- a^{2m+c_0-i\tau}(b/a)^k\right|} \\
   &\ll \left(\frac{a}{\sqrt{D}}\right)^{2m+c_0}\sum_{k=0}^\infty \frac{(|\tau| |b|/a)^k}{k!}\ll \left(\frac{a}{\sqrt{D}}\right)^{2m+c_0}e^{|\tau|}.
\end{align*}
Now, we easily deduce that the two integrals on the right-hand side of \eqref{Residue Th} taken over the horizontal lines tend to zero when $T\rightarrow \infty $ and that the integral along the vertical line, when $t\downarrow 0$ can be estimated as
\begin{eqnarray*}
\left\vert \int\limits_{-2m-c_{0}-iT}^{-2m-c_{0}+iT}\zeta _{U}\left(
s\right) \Gamma \left( s\right) t^{-s}ds\right\vert \ll t^{2m+c_{0}} \int\limits_{1}^{\infty}e^{(1-\pi/2)\tau} d\tau=O(t^{2m+c_0}),
\end{eqnarray*}
where the bound is uniform in $T$. Therefore, letting $T\to \infty$ in \eqref{Residue Th} yields that
\begin{equation}\label{eq: theta precomputation}
\theta_U(t)=1+\lim_{T\to \infty}\sum_{s\in R(c,m,c_0,T)} \mathrm{Res}\left( \zeta _{U}\left(
s\right) \Gamma \left( s\right) t^{-s}\right) + O(t^{2m+c_0}), \text{ as } t\downarrow 0,
\end{equation}
under assumption that the limit of the sum over residues is finite. This limit equals the sum over residues of the function $ \zeta _{U}\left(
s\right) \Gamma \left( s\right) t^{-s}$ in the strip $-2m-c_0\leq \Re(s) \leq c$. Now, we will compute the sum of residues of this function at its poles in the strip $-2m-c_0\leq \Re(s) \leq c$ and prove it is finite.

For $k\geq 0$ and $n\in\mathbb{Z}$, the pole $s=s_{k,n}=-k\left( 1-\frac{\log |b|}{\log a}\right)+\frac{\left( 2n+l_{Q,k}\right) \pi i}{\log a}$ of $\zeta_U(s)$ belongs to the strip $-2m-c_0\leq \Re(s) \leq c$ if and only if $k\in\{0,1,\ldots,M\}$, where $M=\lfloor2m/\left(1-\frac{\log|b|}{\log a}\right) \rfloor$. Therefore, all poles $w$ of $ \zeta _{U}\left(s\right) \Gamma \left( s\right) t^{-s}$ in the strip $-2m-c_0\leq \Re(s) \leq c$ are of the form $w=s_{n,k}$, for $k\in\{0,1,\ldots,M\}$ and $n\in \mathbb{Z}$ and of the form $w=-j$, $j\in\{0,\ldots,2m\}$, where poles $w=s_{n,k}\notin \mathbb{Z}_{<0}$ and $w=-j\notin B(m,a,b)$ are simple, while poles $w=0$ and $w=-\ell\in B(m,a,b)$ are double poles. Let us compute residues at all those poles. First, we treat simple poles (there are infinitely many such poles in our strip).

If $k\in\{0,1,\ldots,M\}$ and $n\in\mathbb{Z}$ are such that $s_{k,n}\notin \{-2m,-2m+1,\ldots,-1\}$, then the function $\zeta _{U}\left( s\right)\Gamma \left( s\right) t^{-s}$ has a simple pole at $s=s_{k,n}$ with the residue
\begin{gather*}
\underset{s=s_{k,n}}{\mathrm{Res}}\left( \zeta _{U}\left( s\right) \Gamma
\left( s\right) t^{-s}\right) =D^{-k+k\frac{\log \left\vert Q\right\vert }{%
2\log a}+\frac{\left( 2n+l_{Q,k}\right) \pi i}{\log a}}\binom{-2k+\frac{%
k\log \left\vert Q\right\vert }{\log a}+\frac{\left( 2n+l_{Q,k}\right) \pi i%
}{\log a}}{k} \\
\cdot \frac{\left( -1\right) ^{k}}{\log a}\Gamma \left( -2k+\frac{k\log
\left\vert Q\right\vert }{\log a}+\frac{\left( 2n+l_{Q,k}\right) \pi i}{\log
a}\right) t^{2k-\frac{k\log \left\vert Q\right\vert }{\log a}-\frac{\left(
2n+l_{Q,k}\right) \pi i}{\log a}} \\
=D^{\frac{k}{2}\left( \frac{\log |b|}{\log a}-1\right) +\frac{\left(2n+l_{Q,k}\right)\pi i}{%
2\log a}}\frac{1}{k!\log a}\Gamma \left( \frac{\log |b|}{\log a}k+\frac{%
\left(2n+l_{Q,k} \right)\pi i}{\log a}\right) t^{k\left( 1-\frac{\log |b|}{\log a}\right) -%
\frac{\left(2n+l_{Q,k} \right)\pi i}{\log a}}.
\end{gather*}
The asymptotic relation \eqref{eq. gamma estimate} yields the following estimate for any integer $k\geq 0$:
\begin{multline*}
\sum\limits_{n\in
A_{a,b}(k)}\left\vert \underset{s=s_{k,n}}{\mathrm{Res}}\left( \zeta _{U}\left(
s\right) \Gamma \left( s\right) t^{-s}\right) \right\vert \ll \frac{%
\sqrt{2\pi }}{k!}\left( \log a\right) ^{-\frac{1}{2}-\frac{\log |b|}{\log a}%
k}D^{\frac{k}{2}\left( \frac{\log |b|}{\log a}-1\right) }t^{k\left( 1-\frac{%
\log |b|}{\log a}\right) } \\
\cdot \sum\limits_{n\in A_{a,b}(k)\setminus\{-\tfrac{l_{Q,k}}{2}\}}e^{-\frac{\pi ^{2}}{2\log a}\left\vert 2n+l_{Q,k}\right\vert }\left\vert
2n+l_{Q,k}\right\vert ^{-\frac{1}{2}+\frac{\log |b|}{\log a}k},
\end{multline*}
which proves that the series $\sum\limits_{n\in A_{a,b}(k)}\underset{s=s_{k,n}}{\mathrm{Res}}\left( \zeta _{U}\left( s\right) \Gamma
\left( s\right) t^{-s}\right) $ is absolutely convergent.

For a fixed $k\in\{0,1,\ldots,M\}$ and all integers $n$ such that $s_{n,k}\notin\mathbb{Z}_{<0}$ we can write
\begin{equation*}
\sum\limits_{n\in A_{a,b}(k)}\underset{s=s_{k,n}}{\mathrm{Res}}\left( \zeta _{U}\left( s\right) \Gamma
\left( s\right) t^{-s}\right) =c_{a,b}\left( k,t\right) t^{k\left( 1-\frac{%
\log |b|}{\log a}\right) -\frac{l_{Q,k}\pi i}{\log a}},
\end{equation*}%
where $c_{a,b}\left( k,t\right)$ is defined by \eqref{eq: defn of c a,b}.

If $-s=\ell\notin B(m,a,b)$ for an integer $\ell\in\{1,\ldots,2m\}$, then $-\ell$ is a simple pole of $\zeta_U(s)\Gamma(s)t^{-s}$ with the residue $\frac{(-1)^\ell}{\ell!}\zeta_U(-\ell)t^\ell$, hence the contribution of all such poles to the sum of all residues in the strip $-2m-c_0\leq \Re (s) \leq c$ is given by
\begin{equation} \label{eq: simple poles at l}
\sum_{\ell\in\{1,\ldots,2m\}\setminus B(m,a,b) }\frac{(-1)^\ell}{\ell!}\zeta_U(-\ell)t^\ell.
\end{equation}

It is left to evaluate contribution from double poles. Let us start with the double pole at $0$. Multiplying the Laurent series expansions \eqref{eq: series for zetaU at s=0} and \eqref{eq: gamma exp} (for $\ell=0$) with the Taylor series expansion $t^{-s}=e^{-s\log t}=1-s\log t+O\left( s^{2}\right) $, we get
\begin{equation*}
\underset{s=0}{\mathrm{Res}}\left( \zeta _{U}\left( s\right) \Gamma \left(
s\right) t^{-s}\right) =\frac{\log D-\log a-2\gamma }{2\log a}-\frac{\log t}{%
\log a}.
\end{equation*}

Now, we consider the case when $B(m,a,b)\neq \emptyset$, i.e. when  $s=s_{k,n}=-\ell$, for $\ell\in \mathbb{N}$. In this case, $-\ell$ is a double pole of the function $\zeta _{U}\left( s\right) \Gamma \left(s\right) t^{-s}$.  In this case, we write $\zeta_U(s)$ for $s$ close to $-\ell$ as
\begin{multline}\label{eq: Laurent series zeta at -l}
\zeta_U(s)= D^{-\ell/2}\binom{\ell}{k}\frac{(-1)^k}{\log a}\left(\frac{1}{s+\ell} + \left( \frac{\log D - \log a}{2} + H_{\ell-k} - H_{\ell}\right) \right)+\\ + D^{-\ell/2}\sum_{j=0, j\neq k}^{\ell}\binom{\ell}{j}(-1)^j\frac{a^{\ell-j} b^j}{1-a^{\ell-j} b^j}+O\left(s+\ell\right),
\end{multline}
and use the Taylor series expansion $t^{-s}= t^{\ell}(1-(s+\ell)\log t + O((s+\ell)^2))$ for $s$ close to $-\ell$, and the Laurent series expansion \eqref{eq: gamma exp} to deduce that
\begin{multline*}
\underset{s=-\ell}{\mathrm{Res}}\left(\zeta _{U}\left( s\right)\Gamma(s)t^{-s}\right) =\frac{(-1)^{\ell}}{\ell!}D^{-\frac{\ell}{2}}\frac{\left( -1\right) ^{k}}{\log a}\binom{\ell}{k}t^{\ell}\left( \frac{\log D - \log a}{2} + H_{\ell-k} - \gamma - \log t\right)\\
+ \frac{(-1)^{\ell}}{\ell!}D^{-\frac{\ell}{2}} t^{\ell}\sum_{j=0, j\neq k}^{\ell}\binom{\ell}{j}(-1)^j\frac{a^{\ell-j} b^j}{1-a^{\ell-j} b^j}= (d_{a,b}\left( \ell\right)-\tilde{d}_{a,b}\left( \ell\right)\log t ) t^{\ell},
\end{multline*}
where $d_{a,b}(\ell)$ and $\tilde{d}_{a,b}(\ell)$ are given by \eqref{eq: defn of d a,b} and \eqref{eq: defn of d a,b tilde}, respectively.

Now, we see that the sum of all residues of the function $\zeta _{U}\left( s\right)
\Gamma \left( s\right) t^{-s}$ inside the strip $-2m-c_0\leq \Re(s) \leq c$ equals
\begin{multline*}
\sum_{\underset{ -2m-c_0\leq \Re(s) \leq c }{\text{all poles } s,}}\mathrm{Res}\left( \zeta _{U}\left( s\right)
\Gamma \left( s\right) t^{-s}\right) =\frac{\log D-\log a-2\gamma }{2\log a} +\sum\limits_{k=0}^{M}c_{a,b}\left(
k,t\right) t^{k\left( 1-\frac{\log |b|}{\log a}\right) -\frac{l_{Q,k}\pi i}{\log a%
}}\\ +\sum\limits_{\ell\in B(m,a,b)}(d_{a,b}\left( \ell\right)-\tilde{d}_{a,b}\left( \ell\right)\log t )t^\ell -\frac{\log t}{\log a} +\sum_{\ell\in\{1,\ldots,2m\}\setminus B(m,a,b) }\frac{(-1)^\ell}{\ell!}\zeta_U(-\ell)t^{\ell},
\end{multline*}
where $c_{a,b}\left(k,t\right)$ is defined by \eqref{eq: defn of c a,b} and $d_{a,b}\left( \ell\right)$ and $\tilde{d}_{a,b}\left( \ell\right)$ are defined by \eqref{eq: defn of d a,b} and \eqref{eq: defn of d a,b tilde}, respectively. Combining this with \eqref{eq: theta precomputation} completes the proof.
\end{proof}

\section{The Hurwitz-type zeta function associated to the Lucas sequence}

In this section we will prove that the Hurwitz-type zeta function associated to the Lucas sequence $\{U_{n}\}_{n\geq 0}$,  defined for $\Re (s)>0$ and $\Re (z)>0$ by \eqref{eq: zeta defn}, for all $z\in\mathbb{C}\setminus (-\infty,0]$ can be meromorphically continued to the whole complex $s-$plane and we will identify its polar structure.

The starting point of our investigation is the fact that $\zeta_U(s,z)$, for $\Re (s)>0$ and $\Re (z)>0$, can be expressed as the Laplace-Mellin transform of the theta function $\theta_U(t)$:
\begin{equation}
\zeta _{U}\left( s,z\right) =
\frac{1}{\Gamma \left( s\right) }\int_{0}^{\infty }\theta
_{U}\left( t\right) e^{-zt}t^{s}\frac{dt}{t}.  \label{Hurwitz}
\end{equation}

We define the subset $B(a,b)$ of positive integers by
$$B(a,b)=\bigcup_{m \in \mathbb{N}}B(m,a,b).$$

The main result of the paper is the following theorem.
\begin{thm}\label{thm: main}
The Hurwitz-type zeta function $\zeta _{U}\left( s,z\right) $, for $z\in \mathbb{C}\setminus
\left( -\infty ,0\right] $, can be continued to a meromorphic function on the complex $s-$plane, with simple poles at $s=-\ell $
and $s_{\ell,k,n}=-\ell -k\left( 1-\frac{\log |b|}{\log a}\right) + \frac{%
\left(2n+l_{Q,k} \right)\pi i}{\log a}$, for $\ell, k \in \mathbb{N}_{0}, \ n\in A_{a,b}(k)$, with the corresponding residues given by $$\frac{z^{\ell}}{\log a}+\sum\limits_{j \in B(a,b), \ j\leq \ell}\tilde{d}_{a,b}\left( j\right)\left(-\ell\right)_jz^{\ell-j}$$
and
\begin{equation}\label{eq: residue Hzeta big}
C_{a,b}\left(k\right) D^{\frac{n\pi i }{\log a}}\frac{\left( -z\right)
^{\ell}}{\ell!}\prod%
\limits_{j=1}^{\ell+k}\left( k\frac{\log |b|}{\log a} +\frac{\left(2n+l_{Q,k}\right) \pi i}{\log a}-j\right),
\end{equation}
respectively, where
\begin{equation} \label{eq: defn Ca,b}
C_{a,b}(k)=\frac{D^{\frac{k}{2}\left( \frac{\log |b|}{\log a}-1\right) +\frac{%
l_{Q,k}\pi i}{2\log a}}}{k!\log a},
\end{equation}
with the convention that if $s_{\ell_{1},k_{1},n_{1}}=s_{\ell_{2},k_{2},n_{2}}$ for different triples $(\ell_{m},k_m,n_m)$, $m=1,2$, the corresponding residues are added.
\end{thm}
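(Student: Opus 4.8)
The plan is to feed the asymptotic expansion of $\theta_U(t)$ from Theorem \ref{thm: theta exp} into the Laplace--Mellin representation \eqref{Hurwitz} and read off the poles term by term. First I would split the integral in \eqref{Hurwitz} as $\int_0^1 + \int_1^\infty$. By Lemma \ref{lem2}(i) (with $C$ chosen larger than $-\Re(z)$, or simply using $\Re(z)>0$ first and continuing in $z$ afterwards) the tail integral $\int_1^\infty \theta_U(t)e^{-zt}t^{s-1}\,dt$ converges for all $s\in\mathbb{C}$ and defines an entire function of $s$; dividing by $\Gamma(s)$ (entire, nonvanishing reciprocal) keeps it entire, so it contributes no poles. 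The analytic content is therefore entirely in $\frac{1}{\Gamma(s)}\int_0^1\theta_U(t)e^{-zt}t^{s-1}\,dt$, and here I would expand $e^{-zt}=\sum_{p\ge 0}\frac{(-z)^p}{p!}t^p$ and substitute the expansion \eqref{eq: theta asympt} for $\theta_U(t)$, valid up to $O(t^{2m+c_0})$ for arbitrary $m$.

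Next I would compute the Mellin transform $\int_0^1 t^{\alpha}(\log t)^\nu\,t^{s-1}\,dt = \frac{(-1)^\nu \nu!}{(s+\alpha)^{\nu+1}}$ for each building block appearing after multiplying \eqref{eq: theta asympt} by $\sum_p \frac{(-z)^p}{p!}t^p$. The block $-\frac{\log t}{\log a}+\text{const}$ times $\frac{(-z)^p}{p!}t^p$ produces, after dividing by $\Gamma(s)$, a simple pole at $s=-p$ — and here the double pole $\frac{1}{(s+p)^2}$ coming from the $\log t$ is killed by the simple zero of $1/\Gamma(s)$ at negative integers, leaving a genuine simple pole whose residue I would track to be $z^p/\log a$ (consistent with \eqref{eq: series for zetaU at s=0} at $p=0$ and collecting the binomial $p!/((p-\ell)!\ell!)$-type factors). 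Similarly, the terms $t^\ell$ and $t^\ell\log t$ for $\ell\in B(m,a,b)$ with coefficients $d_{a,b}(\ell), \tilde d_{a,b}(\ell)$, after multiplication by $t^p$ and division by $\Gamma(s)$, yield simple poles at $s=-\ell-p$ with residue contributions $\tilde d_{a,b}(\ell)\,(-\ell-p+\ell)\cdots$; summing over the ways $\ell+p$ equals a fixed value $L$ and relabelling gives the stated residue $\frac{z^L}{\log a}+\sum_{j\in B(a,b),\,j\le L}\tilde d_{a,b}(j)(-L)_j z^{L-j}$ at $s=-L$. Finally, the terms $c_{a,b}(k,t)\,t^{k(1-\log|b|/\log a)-l_{Q,k}\pi i/\log a}$, once the oscillatory factor $t^{-2n\pi i/\log a}$ inside $c_{a,b}(k,t)$ is made explicit, are a sum over $n\in A_{a,b}(k)$ of monomials $t^{k(1-\log|b|/\log a)+(2n+l_{Q,k})\pi i/\log a}$ with coefficient $C_{a,b}(k)D^{n\pi i/\log a}\Gamma(k\log|b|/\log a + (2n+l_{Q,k})\pi i/\log a)$; multiplying by $\frac{(-z)^\ell}{\ell!}t^\ell$ and applying Mellin gives a simple pole at $s_{\ell,k,n}$ whose residue, after using the reflection-type identity $\Gamma(w)/\Gamma(w-\ell-k) = \prod_{j=1}^{\ell+k}(w-j)$ to rewrite $\Gamma(w)/\Gamma(s_{\ell,k,n})$ when dividing by $\Gamma(s)$, yields exactly \eqref{eq: residue Hzeta big}.

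The main obstacle I anticipate is the bookkeeping: each actual pole of $\zeta_U(s,z)$ receives contributions from infinitely many (well, finitely many up to any given depth $m$, but in principle unboundedly many as $m$ grows) pairs $(\ell,p)$ or $(k,n,\ell)$, and one must argue that (a) increasing $m$ only adds new terms of the form $O(t^{2m+c_0})$ whose Mellin transform is holomorphic for $\Re(s)>-2m-c_0$, so the meromorphic continuation is consistent and exhausts the plane as $m\to\infty$; (b) the coincidence convention — when $s_{\ell_1,k_1,n_1}=s_{\ell_2,k_2,n_2}$ or equals a negative integer — is handled by simply summing residues, which is automatic since residues are additive; and (c) the residue formulas obtained from the $\log t$-bearing terms really collapse to simple poles, i.e. the $1/\Gamma(s)$ factor genuinely cancels the square. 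This last point needs care: near $s=-L$ one writes $1/\Gamma(s) = (-1)^L L!\,(s+L)\,(1+O(s+L))$, so a $\frac{A}{(s+L)^2}+\frac{B}{s+L}$ Laurent tail becomes $(-1)^LL!(A + B(s+L)+\cdots)/(s+L)$, a simple pole with residue $(-1)^LL!\,A$; so only the $\log t$-coefficient $A$ survives and the non-log constant $B$ drops out, which explains why the negative-integer residues involve only $\tilde d_{a,b}$ and $1/\log a$, not $d_{a,b}$.

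I would organize the write-up as: (1) entire part from $\int_1^\infty$; (2) the substitution and the elementary Mellin lemma $\int_0^1 t^{s+\alpha-1}(\log t)^\nu dt$; (3) collecting contributions at $s=-L$, $L\in\mathbb{N}_0$, proving the pole is simple via the $1/\Gamma$ cancellation and computing the residue; (4) collecting contributions at $s=s_{\ell,k,n}$, using the $\Gamma$-quotient-to-product identity for the residue; (5) the limiting argument $m\to\infty$ for full continuation; (6) remark on the additivity convention for coincident poles. No step is individually deep — it is Riemann's classical theta-to-zeta argument — but the combinatorial reconciliation of the many sources feeding each pole is where errors would hide.
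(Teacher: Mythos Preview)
Your plan is sound and would yield a correct proof, but it differs from the paper's in one structural choice. Rather than splitting at $t=1$ and expanding $e^{-zt}$ as a Taylor series, the paper adds and subtracts the whole asymptotic block $\alpha_{a,b,m}(t)$ on all of $(0,\infty)$ and then evaluates $\tfrac{1}{\Gamma(s)}\int_0^\infty\alpha_{a,b,m}(t)\,e^{-zt}t^{s-1}\,dt$ in closed form using the tabulated Laplace--Mellin transforms $\int_0^\infty e^{-zt}t^{s-1}\,dt=\Gamma(s)z^{-s}$ and $\int_0^\infty e^{-zt}t^{s-1}\log t\,dt=\Gamma(s)(\psi(s)-\log z)z^{-s}$. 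This keeps the $z$-dependence packaged as $z^{-s-\alpha}$ rather than as a power series, so the poles at $s=-\ell$ are read off directly from the simple poles of $\psi(s)$ and $\psi(s+j)$, and those at $s_{\ell,k,n}$ from the shifted $\Gamma$-factors, with no recombination of a $p$-sum required. Your route is more elementary---no integral tables, only $\int_0^1 t^{s+\alpha-1}(\log t)^\nu\,dt$---but pays with the extra Taylor sum over $p$ and the bookkeeping you rightly anticipate; both reach the same residues, and your observation that $1/\Gamma(s)$ downgrades the $\log t$-induced double poles to simple ones is exactly the mechanism that in the paper's version appears as the simple poles of $\psi$. Two small points to fix when writing up: the exponent in your $c_{a,b}$ monomial should be $t^{k(1-\log|b|/\log a)\,-\,(2n+l_{Q,k})\pi i/\log a}$ (sign on the imaginary part), and the continuation in $z$ to $\mathbb{C}\setminus(-\infty,0]$---which you defer---the paper carries out as a separate preliminary step by peeling off finitely many terms of the defining series and bounding the tail via Lemma~\ref{lem2}(i).
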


\begin{proof}
We start with continuation of $\zeta _{U}\left( s,z\right)$ in $z-$variable, for a fixed $s$ with $\Re (s)>0$. For an arbitrary $m \in \mathbb{N}$, it suffices to prove meromorphic continuation to the half plane $\Re (z)>-m$ with the cut along the negative real axis.

By Lemma \ref{lem2} (i) there exist $N\in\mathbb{N}$ and $K>0$ such that

\begin{equation}  \label{eq: AS3 cond}
\left\vert \sum\limits_{n=N}^{\infty }e^{-U_{n}t}\right\vert \leq
Ke^{-\left(m+1\right) t}.
\end{equation}

For such $N$, the Hurwitz-type zeta function can be written as
\begin{eqnarray*}
\zeta _{U}\left( s,z\right) &=&\frac{1}{\Gamma \left( s\right) }\left[
\int_{0}^{\infty }e^{-zt}t^{s}\frac{dt}{t}+\sum\limits_{n=1}^{N-1}\int_{0}^{%
\infty }e^{-U_{n}t}e^{-zt}t^{s}\frac{dt}{t}+\int_{0}^{\infty
}\sum\limits_{n=N}^{\infty }e^{-U_{n}t}e^{-zt}t^{s}\frac{dt}{t}\right] \\
&=&\frac{1}{z^{s}}+\sum\limits_{n=1}^{N-1}\frac{1}{\left( z+U_{n}\right) ^{s}%
}+\frac{1}{\Gamma \left( s\right) }\int_{0}^{\infty
}\sum\limits_{n=N}^{\infty }e^{-U_{n}t}e^{-zt}t^{s}\frac{dt}{t}.
\end{eqnarray*}
The first two summands are meromorphic functions of $z\in \mathbb{C}%
\setminus \left( -\infty ,0\right] $. Let us consider the third summand. The
bound \eqref{eq: AS3 cond} yields that the integral converges uniformly in $%
z $ on every compact subset of the half-plane $\Re (z)>-m$, and hence
represents a holomorphic function in $z$ in this half-plane. This completes
the proof of meromorphic continuation of $\zeta _{U}\left( s,z\right)$ in $%
z- $variable, for a fixed $s$ with $\Re (s)>0$.

Now we turn our attention to meromorphic continuation of Hurwitz-type zeta function $\zeta _{U}\left( s,z\right) $ in $s-$variable. We start with the
representation
\begin{equation}  \label{pos}
\begin{aligned} \zeta _{U}\left( s,z\right) &=
\frac{1}{\Gamma(s)}\int_{0}^{1}\left( \theta _{U}\left( t\right) -\alpha
_{a,b,m}\left( t\right) \right) e^{-zt}t^{s}\frac{dt}{t}\\
&+\frac{1}{\Gamma(s)}\int_{1}^{\infty }\left( \theta _{U}\left( t\right)
-\alpha _{a,b,m}\left( t\right) \right)
e^{-zt}t^{s}\frac{dt}{t}+\frac{1}{\Gamma(s)}\int_{0}^{\infty }\alpha
_{a,b,m}\left( t\right) e^{-zt}t^{s}\frac{dt}{t}, \end{aligned}
\end{equation}
where we put
\begin{equation*}
\begin{split}
\alpha _{a,b,m}\left( t\right) &=\frac{\log D+\log a-2\gamma }{%
2\log a}-\frac{\log t}{\log a}+\sum\limits_{k=0}^{M}c_{a,b}\left(
k,t\right) t^{k\left( 1-\frac{\log |b|}{\log a}\right) -\frac{l_{Q,k}}{\log a%
}\pi i}+\\
&+\sum\limits_{\ell=1}^{2m}e_{a,b}\left( \ell\right)t^{\ell}-\sum\limits_{\ell \in B(m,a,b)}\tilde{d}_{a,b}\left(\ell \right)t^{\ell}\log t,
\end{split}
\end{equation*}
where
$$
e_{a,b}(\ell)=\left\{
           \begin{array}{ll}
             d_{a,b}(\ell), & \text{ if  } \ell \in B(m,a,b); \\
             \frac{(-1)^\ell}{\ell!}\zeta_U(-\ell), & \text{ if  } \ell\in\{1,\ldots,2m\}\setminus B(m,a,b).
           \end{array}
         \right.
$$

Equation \eqref{eq: theta asympt} yields the representation
\begin{equation*}
\theta _{U}\left( t\right) =\alpha _{a,b,m}\left( t\right) +O\left(
t^{2m+c_{0}}\right) ,
\end{equation*}
hence, the first integral in (\ref{pos}) is holomorphic for $\Re (s)>-\left(
2m+c_{0}\right) $.

The second integral on the right-hand side of \eqref{pos} is holomorphic for $%
\Re (z)>0$ and all $s\in \mathbb{C}$. Its continuation to the cut $%
z-$plane is done in the same way as above, hence, we may consider it to be
holomorphic in the whole $s-$plane, for $z\in \mathbb{C}\setminus \left(
-\infty ,0\right]$.

The third integral on the right-hand side of \eqref{pos} can be written as
\begin{equation*}
\frac{1}{\Gamma (s)}\int_{0}^{+\infty }\alpha _{a,b,m}\left( t\right)
e^{-zt}t^{s}\frac{dt}{t}:=I_{1}-I_{2}+I_{3}+I_{4}-I_{5},
\end{equation*}%
where
\begin{eqnarray*}
I_{1} &=&\frac{\log D+\log a-2\gamma }{2 \log a\Gamma (s)}%
\int_{0}^{+\infty }e^{-zt}t^{s-1}dt=\frac{\log D+\log a-2\gamma }{2 \log a}%
\cdot \frac{1}{z^{s}}; \\
I_{2} &=&\frac{1}{\log a\Gamma (s)}\int_{0}^{+\infty
}e^{-zt}t^{s-1}\log tdt=\frac{1}{\log a}\frac{\psi
\left( s\right) -\log z}{z^{s}} ; \\
I_{3} &=&\frac{1}{\Gamma (s)}\sum\limits_{k=0}^{M}\int_{0}^{+\infty
}c_{a,b}\left( k,t\right) t^{k\left( 1-\frac{\log |b|}{\log a}\right) -\frac{%
l_{Q,k}}{\log a}\pi i}e^{-zt}t^{s-1}dt; \\
I_{4} &=&\frac{1}{\Gamma (s)}\sum\limits_{\ell=1}^{2m}e_{a,b}\left(
\ell\right) \int_{0}^{+\infty }e^{-zt}t^{s+\ell-1}dt=\sum\limits_{\ell=1}^{2m}e_{a,b}\left( \ell\right) \frac{\left( s \right)_\ell
}{ z^{s+\ell}}; \\
I_{5} &=&\frac{1}{\Gamma (s)}\sum\limits_{\ell \in B(m,a,b)}\tilde{d}_{a,b}\left(
\ell\right) \int_{0}^{+\infty }e^{-zt}t^{s+\ell-1}\log t dt \\ &=&\sum\limits_{\ell \in B(m,a,b)}\tilde{d}_{a,b}\left( \ell\right) \frac{\left( s \right)_\ell
 \left(\psi\left( s+\ell\right) -\log z\right)}{ z^{s+\ell}}.
\end{eqnarray*}%
Integrals $I_1$, $I_2$, $I_4$ and $I_5$ were deduced using \cite[formulae 3.381.4 on p. 346 and
4.352.1 on p. 573]{knjiga integrala}, where $\left( s \right)_\ell$ is the Pochhammer symbol.

Integrals $I_1$ and $I_4$ are obviously holomorphic functions of $s$. It is known that digamma function $\psi \left( s\right) $ is meromorphic with simple poles at $s=-\ell$, $\ell\in
\mathbb{N}_{0}$ and corresponding residues $-1$ (e.g. see \cite[p. 24]{zeta}). Therefore, integral $I_2$ is meromorphic in $s$ with simple poles at zero and negative integers, and integral $I_5$ is meromorphic in $s$, with simple poles at negative integers $j$ such that $j\leq -\ell$, for $\ell\in B(m, a,b)$.

It is left to evaluate $I_{3}$ above, show it is a meromorphic function in $s$ and deduce its poles.
Recall that
\begin{equation*}
c_{a,b}\left( k,t\right) =C_{a,b}(k)\sum\limits_{n\in A_{a,b}(k)}D^{\frac{n\pi i }{\log a}}\Gamma \left( \frac{\log |b|}{\log a}k+\frac{%
(2n+l_{Q,k})\pi i}{\log a}\right) t^{-\frac{2n\pi i}{\log a}},
\end{equation*}%
where $C_{a,b}(k)$ is defined by \eqref{eq: defn Ca,b}.

Therefore,
\begin{multline*}
\int_{0}^{+\infty }c_{a,b}\left( k,t\right) t^{k\left( 1-\frac{\log |b|}{%
\log a}\right) -\frac{l_{Q,k}\pi i}{\log a}}e^{-zt}t^{s-1}dt= C_{a,b}(k) \cdot\\
\cdot\sum\limits_{n\in A_{a,b}(k)}D^{\frac{n\pi i}{\log a}}\Gamma
\left( \frac{\log |b|}{\log a}k+\frac{(2n+l_{Q,k})\pi i}{\log a}\right)
\int_{0}^{+\infty }t^{k\left( 1-\frac{\log |b|}{\log a}\right) -\frac{%
(2n+l_{Q,k})\pi i}{\log a} +s-1}e^{-zt}dt,
\end{multline*}
where interchanging the sum and the integral in the equation above is
justified for $\Re (z)>0$ by the bound

\begin{multline*}
\sum\limits_{n\in A_{a,b}(k)}e^{-\frac{\pi ^{2}}{2\log a}\left\vert
2n+l_{Q,k}\right\vert }\left\vert 2n+l_{Q,k}\right\vert ^{-\frac{1}{2}+\frac{%
\log |b|}{\log a}k}\int_{0}^{+\infty }t^{k\left( 1-\frac{\log |b|}{\log a}%
\right) +\Re (s)-1}e^{-\Re (z)t}dt \\
\leq \sum\limits_{n\in A_{a,b}(k)}e^{-\frac{\pi ^{2}}{2\log a}\left\vert
2n+l_{Q,k}\right\vert }\left\vert 2n+l_{Q,k}\right\vert ^{-\frac{1}{2}+\frac{%
\log |b|}{\log a}k}\frac{\Gamma \left( k\left( 1-\frac{\log |b|}{\log a}%
\right) +\Re (s)\right) }{(\Re (z))^{k\left( 1-\frac{\log |b|}{\log a}\right)
+\Re (s)}},
\end{multline*}%
and the fact that each term on the right-hand side of the above display
decays exponentially. This proves that
\begin{multline*}
I_{3}=\frac{1}{\Gamma (s)}\sum\limits_{k=0}^{M}C_{a,b}(k)\sum\limits_{n \in A_{a,b}(k)}D^{\frac{n\pi i}{\log a}}\Gamma \left( \frac{\log |b|}{\log a}%
k+\frac{(2n+l_{Q,k})\pi i}{\log a}\right) \\
\cdot \frac{\Gamma \left( k\left( 1-\frac{\log |b|}{\log a}\right) -\frac{%
(2n+l_{Q,k})\pi i}{\log a}+s\right) }{z^{k\left( 1-\frac{\log |b|}{\log a}%
\right) -\frac{(2n+l_{Q,k})\pi i}{\log a}+s}}.
\end{multline*}
Therefore, $I_3$ is meromorphic, with simple poles at
\begin{equation} \label{eq: s lnk}
s_{\ell, k, n}=-\ell -k\left( 1-\frac{\log |b|}{\log a}\right) + \frac{(2n+l_{Q,k}) \pi i}{%
\log a},\ \ell, k \in \mathbb{N}_{0}, \ n\in A_{a,b}(k).
\end{equation}
This proves meromorphic continuation of the third term on the right-hand side of \eqref{pos} to all $z$ in the cut plane $\mathbb{C}\setminus (-\infty,0]$ and all $s$ with $\Re(s)> -m$.

Since $m \in \mathbb{N}$ was arbitrarily chosen, the above analysis yields that $\zeta_U(s,z)$ is meromorphic in $s\in
\mathbb{C}$, and holomorphic in $z\in \mathbb{C}\setminus (-\infty ,0]$,
with simple poles at non-positive integers and at $s_{\ell, k, n}$ given by \eqref{eq: s lnk}. A simple computation shows that
\begin{eqnarray*}
\underset{%
\begin{array}{c}
s=-\ell
\end{array}%
}{\mathrm{Res}}\zeta _{U}\left( s,z\right) &=&\frac{z^{\ell}}{\log a}+\sum\limits_{j \in B(a,b), \ j \leq \ell}\tilde{d}_{a,b}\left( j\right)\left(-\ell\right)_jz^{\ell-j}, \ \ell\in \mathbb{N}_{0}, \\
\underset{%
\begin{array}{c}
s=s_{\ell,k,n}
\end{array}%
}{\mathrm{Res}}\zeta _{U}\left( s,z\right) &=&C_{a,b}\left(k\right) D^{\frac{n\pi i }{\log a}}\frac{\left( -z\right)
^{\ell}}{\ell!}\prod%
\limits_{j=1}^{\ell+k}\left( k\frac{\log |b|}{\log a} +\frac{(2n+l_{Q,k})\pi i}{%
\log a}-j\right), \\
&&\ell, k \in \mathbb{N}_{0}, \ n\in A_{a,b}(k).
\end{eqnarray*}

The proof is complete.
\end{proof}

\section{Examples and concluding remarks}

In this section we present two interesting examples and give remarks on zeta functions associated to order one and order three recurrence sequences. In the first example we consider the Hurwitz-type zeta function associated to the Lucas sequence $\{U_n\}_{n\geq 0}$, where $U_n$, for $n\geq 1$ is the sum of the first $n$ terms in the divergent geometric series $\sum_{k\geq 0} a^k$, $a>1$. In the second example we take $U_n$ to be the Fibonacci sequence.

Then, we will discuss the Hurwitz-type zeta function associated to the divergent geometric sequence, which arises in the situation when $Q=0$ (and $a>1$). We will end the paper with a remark on a follow-up of this paper, related to zeta functions associated to recurrence sequences of order three.

\subsection{Examples}

\begin{example}\rm
Let $b=1$, $a>1$ and set $U_0=1$; $U_n=\sum_{j=0}^{n-1}a^j$, for $n\geq 1$. Then, by Theorem \ref{thm: main} the Hurwitz-type zeta function
\begin{equation} \label{eq: exmaple zeta}
\zeta_U(s,z)=\frac{1}{(z+1)^s}+\sum_{n=1}^{\infty}\frac{1}{(z+(1+\ldots + a^{n-1}))^s},
\end{equation}
initially defined for $\Re(s)>0$ possesses, for all $z\in\mathbb{C}\setminus (-\infty,0]$ a meromorphic continuation to the complex $s-$plane, with simple poles at all non-positive integers $s=-\ell$, $\ell\in\mathbb{N}_0$, and at numbers $s_{m,n}= -m + \frac{2\pi i n}{\log a}$, $m\in\mathbb{N}_0$, $n\in\mathbb{Z}\setminus \{0\}$.

First, we compute residues at $s=-\ell$, $\ell\in\mathbb{N}_0$. Since $b=1$, the set $B(a,b)=B(a,1)$ equals $\mathbb{N}$; moreover, $\tilde{d}_{a,1}(j)=\frac{1}{j! \log a}D^{-j/2}=\frac{1}{j! \log a} (a-1)^{-j}$. Now, we may conclude that
\begin{align*}
\underset{s=-\ell}{\mathrm{Res}}\zeta _{U}\left( s,z\right)& =\frac{1}{\log a}\left( z^{\ell} + \sum_{j=1}^{\ell} \frac{(-\ell)_j }{j!} (a-1)^{-j} z^{\ell-j}\right) \\&= \frac{1}{\log a} \sum_{j=0}^{\ell} \binom{\ell}{j}\left(\frac{1}{1-a}\right)^j  z^{\ell-j} = \frac{1}{\log a} \left(z+\frac{1}{1-a}\right)^\ell.
\end{align*}
We find it amusing that the "sum" of the divergent geometric series $\sum_{n=0}^{\infty}a^n$ appears in the residue, as if we took the limit as $n\to \infty$ in the expression for the $n$th term of the series defining $\zeta_U(s,z)$ at $s=-\ell$.

Now, we prove that the residue of the function \eqref{eq: exmaple zeta} at the pole $s_{m,n}= -m + \frac{2\pi i n}{\log a}$, for some fixed $m\in\mathbb{N}_0$, and $n\in\mathbb{Z}\setminus \{0\}$ is given by
\begin{equation}\label{eq: res at s m,n}
\underset{\begin{array}{c}
s=s_{m,n}
\end{array}}{\mathrm{Res}}\zeta _{U}\left( s,z\right)= \frac{(a-1)^{\tfrac{2n\pi i}{\log a}}}{\log a}\left(z+\frac{1}{1-a}\right)^m\frac{1}{m!}\prod_{j=1}^{m}\left(j-\frac{2n\pi i}{\log a}\right).
\end{equation}
We start with the observation that $s_{m,n}=s_{\ell,k,n}$, where $s_{\ell,k,n}$ is given by formula \eqref{eq: s lnk} if and only if $m=\ell + k$ and $n\in\mathbb{Z}\setminus \{0\}$. Therefore,
$$
\underset{\begin{array}{c}
s=s_{m,n}
\end{array}}{\mathrm{Res}}\zeta _{U}\left( s,z\right)= \sum_{\underset{\ell+k=m}{\ell,k\in\mathbb{N}_0}}
\underset{%
\begin{array}{c}
s=s_{\ell,k,n}
\end{array}%
}{\mathrm{Res}}\zeta _{U}\left( s,z\right).
$$
Since $D=(a-1)^2$, by letting $k=m-\ell$ in \eqref{eq: residue Hzeta big} and \eqref{eq: defn Ca,b} we get
$$
\underset{\begin{array}{c}
s=s_{m,n}
\end{array}}{\mathrm{Res}}\zeta _{U}\left( s,z\right)= \frac{(a-1)^{\tfrac{2n\pi i}{\log a}}}{\log a}\frac{1}{m!}\prod_{j=1}^{m}\left(\frac{2n\pi i}{\log a} -j \right)  \sum_{\ell=0}^{m}\frac{m!}{\ell! (m-\ell)!} (a-1)^{-(m-\ell)} (-z)^{\ell},
$$
hence
$$
\underset{\begin{array}{c}
s=s_{m,n}
\end{array}}{\mathrm{Res}}\zeta _{U}\left( s,z\right)= \frac{(a-1)^{\tfrac{2n\pi i}{\log a}}}{\log a}\frac{1}{m!}\prod_{j=1}^{m}\left(\frac{2n\pi i}{\log a} -j \right)  (-z-\frac{1}{1-a})^m,
$$
which proves \eqref{eq: res at s m,n}.
\end{example}

\begin{example}\rm
Let us investigate the case when $U_{n}=F_{n}$, where $\left\{
F_{n}\right\}_{n\geq 0} $ is the Fibonacci sequence. Then $a=\frac{\sqrt{5}+1}{
2}=\varphi$, $b=\frac{1-\sqrt{5}}{2} = \bar\varphi$. By Theorem \ref{thm: main}, the
Hurwitz-type zeta function
\begin{equation}
\zeta _{F}(s,z)=\sum_{n=0}^{\infty }\frac{1}{(z+F_{n})^{s}},
\label{eq: fib zeta}
\end{equation}%
initially defined for $\Re (s)>0$ possesses, for all $z\in \mathbb{C}
\setminus (-\infty ,0]$ a meromorphic continuation to the complex $s-$plane,
with simple poles at all non-positive integers $s=-\ell $, $\ell \in \mathbb{%
N}_{0}$, and at numbers $s_{\ell ,k,n}=-\ell -2k+\frac{\left( 2n+k\right)
\pi i}{\log a}$, $\ell $ $,k\in \mathbb{N}_{0}$, with $n\in\mathbb{Z}$ for odd $k$ and $n\in \mathbb{Z}%
\setminus \{-\frac{k}{2}\}$, for even $k$.

Let us compute the residues at $s=-\ell $, $\ell \in \mathbb{N}_{0}$.
Since $\frac{\log \left\vert b\right\vert }{\log a}=-1$, we have $
B(a,b)=\mathrm{4}\mathbb{N}$; moreover, $\tilde{d}_{a,b}(j)=\frac{\left(
-1\right) ^{\frac{j}{2}}}{j!\log a}5^{-\frac{j}{2}}\binom{j}{\frac{j}{2}}$.
Now, we may conclude that
\begin{equation}\label{res at minus l}
\underset{s=-\ell }{\mathrm{Res}}\zeta _{F}\left( s,z\right) =\frac{1}{\log a%
}\sum_{j=0}^{\left\lfloor \frac{\mathrm{\ell }}{4}\right\rfloor }\binom{4j}{%
2j}\binom{\ell }{4j}5^{-2j}z^{\ell -4j} =\frac{1}{\log a%
}\sum_{j=0}^{\left\lfloor \frac{\mathrm{\ell }}{4}\right\rfloor }\binom{\ell }{2j,2j}5^{-2j}z^{\ell -4j},
\end{equation}
where $\binom{\ell }{2j,2j}$ denotes the multinomial coefficient $\ell!/((2j)!(2j)! (\ell-2j)!)$.

We find it interesting to notice that the sum appearing on the right-hand side of \eqref{res at minus l} equals the sum of the terms in the trinomial $ (\sqrt{\varphi/5} + \sqrt{|\bar\varphi|/5}+ z)^{\ell}$ which possess rational coefficients.
\end{example}

\subsection{Concluding remarks}

\begin{remark}\rm
When $Q=0$, then the sequence associated to $(P,0)$ is of order one and reduces to the geometric sequence $U_n:= a^n$, $n\geq 0$. As above, we assume that $a>1$. The zeta function associated to the sequence $U=\{a^n\}_{n\geq 0}$ is the sum of the geometric series, i.e.
$$
\zeta_U(s)=\sum_{n=0}^{\infty} \frac{1}{a^{ns}}=\frac{1}{1-a^{-s}}, \text{  for  } \Re(s)>0.
$$
The right-hand side of the above equation obviously provides meromorphic continuation of $\zeta_U(s)$ to the whole complex plane with simple poles at numbers $s=\frac{2k\pi i}{\log a}$, $k\in\mathbb{Z}$.

The Hurwitz-type zeta function is defined by
\begin{equation}\label{eq: zeta Q=0}
\zeta_U(s,z)=\sum_{n=0}^{\infty} \frac{1}{(z+a^{n})^s}, \quad \Re(s)>0.
\end{equation}
Reasoning analogously as in Theorem \ref{thm: theta exp} above, we easily deduce the following expansion of the theta function $\theta_U(t):=\sum_{n\geq 0} \exp(-a^n t)$, as $t\downarrow 0$:
$$
\theta_U(t)=\frac{\log a -2\gamma}{2\log a} -\frac{\log t }{\log a} + \frac{2}{\log a}\Re\left( \sum_{k=1}^{\infty} \Gamma\left(\frac{2k\pi i}{\log a}\right) t^{-\frac{2k\pi i}{\log a}} \right) + O(t^m),
$$
for any $m\geq 1$, where the implied constant is uniform in $t$.
Repeating the steps in the proof of Theorem \ref{thm: main}, we deduce that the Hurwitz-type zeta function \eqref{eq: zeta Q=0}, for $z\in\mathbb{C}\setminus(-\infty,0]$ possesses meromorphic continuation to the whole complex $s-$plane with simple poles at $s=-\ell$ and $s=-\ell + \frac{2k\pi i}{\log a}$, $\ell\in\mathbb{N}_0$, $k\in\mathbb{Z}\setminus\{0\}$ and corresponding residues
\begin{eqnarray*}
\underset{%
\begin{array}{c}
s=-\ell
\end{array}%
}{\mathrm{Res}}\zeta _{U}\left( s,z\right) =\frac{z^{\ell}}{\log a}\quad \text{     and   }
\underset{%
\begin{array}{c}
s=\frac{2k\pi i}{\log a}-\ell
\end{array}%
}{\mathrm{Res}}\zeta _{U}\left( s,z\right) =\frac{\left( -z\right)
^{\ell}}{\ell! \log a}\prod%
\limits_{j=1}^{\ell}\left( \frac{2k\pi i}{\log a} -j\right).
\end{eqnarray*}
\end{remark}

The method developed in this paper to deduce meromorphic continuation of the Hurwitz-type zeta function associated to the sequence $\{U_n\}_{n\geq 0}$ which satisfies the recurrence relation of the second order can be adopted to more general recurrence sequences. For example, when the sequence is given by the recurrence relation of order three, with characteristic polynomial having three distinct roots, elements of the sequence can be represented in terms of the Binet formula and the corresponding zeta function can be meromorphically continued using the Taylor series expansion. An example of such sequence is the Tribonacci sequence; some properties of the associated zeta function were studied in \cite{KomatsuTrib}, but mainly for $s=1$. See also \cite{kilic2, WuZh, WuZh2} for some properties of sums of reciprocals of higher order recurrences.

In the follow up paper we plan to investigate zeta functions and the Hurwitz-type zeta functions associated to the sequences satisfying higher order difference equations.

\end{document}